\let\old@setaddresses\@setaddresses
\def\@setaddresses{\bigskip\bgroup\parindent 0pt\let\scshape\relax\old@setaddresses\egroup}
\author[M.T.~Seweryn]{Michał T. Seweryn}
\address[M.T.~Seweryn]{Theoretical Computer Science Department\\
  Faculty of Mathematics and Computer Science, Jagiellonian University, Krak\'ow, Poland}
\email{michal.seweryn@tcs.uj.edu.pl}
\thanks{Michał Seweryn was partially supported by the National Science Center of Poland under grant no.\ 2015/18/E/ST6/00299.}
\newtheorem{theorem}{Theorem}
\newtheorem{lemma}[theorem]{Lemma}
\newtheorem{claim}[theorem]{Claim}
\newtheorem{observation}[theorem]{Observation}
\newcounter{inlineclaim}[theorem]
\renewcommand{\theinlineclaim}{(\arabic{inlineclaim})}
\newcommand{\nameditem}[1]{%
\item[#1]\protected@edef\@currentlabel{#1}%
}
\newcommand{\bfP}{\mathbf{P}}
\newcommand{\bfS}{\mathbf{S}}
\newcommand{\lein}{\preceq}
\newcommand{\ltin}{\prec}
\DeclareMathOperator{\Inc}{Inc}
\begin{document}
\title{Improved bound for the dimension of posets of treewidth two}

\begin{abstract}
Joret et al.\ proved that posets with cover graphs of treewidth at most \(2\) have dimension at most \(1276\).
Their proof is long and very complex.
We give a short and much simpler proof that the dimension of such posets is at most \(12\).
\end{abstract}
\maketitle
\section{Introduction}

Partially ordered sets (posets for short) are one of the most often studied structures in combinatorics.
Perhaps the most important notion of complexity of posets is dimension.
Recall that the \emph{dimension} of a finite poset \(P\) is the least non-negative integer \(d\) such that the partial order in \(P\) is the intersection of \(d\) linear orders.
For \(n \ge 2\), 
the \emph{standard example} of order \(n\) is a poset \(S_n\) of height \(2\) with \(n\) minimal elements \(a_1, \dots, a_n\) and \(n\) maximal elements \(b_1, \dots, b_n\), such that for \(1 \le i \le n\) and \(1 \le j \le n\), we have \(a_i < b_j\) if and only if \(i \neq j\). The dimension of \(S_n\) is equal to \(n\).

Posets are visualized by their diagrams.
A \emph{diagram} of a poset \(P\) is a drawing on the plane, where each element of \(P\) is represented by a point, and each comparability \(x < y\) which is not
implied by transitivity is represented by a curve that goes upward from \(x\) to \(y\).
The diagram of a poset seen as an abstract graph is its cover graph.
More formally, for two elements \(x\) and \(y\) of \(P\), we say \(y\) \emph{covers} \(x\) if \(x < y\) in \(P\) but there is no element \(z\) of \(P\) such that \(x < z < y\) in \(P\). The \emph{cover graph} of \(P\) is a (simple) graph, whose vertex set is the ground set of \(P\) and two vertices are joined with an edge if and only if one of them covers the other in \(P\).

Connections between dimension of posets and structure of their cover graphs have been studied over years.
There is a common belief, that posets with a sufficiently ``sparse'' cover graph should have a small dimension.
For example, when the cover graph of a poset is a forest, then its dimension is at most \(3\)~\cite{trotter-moore}, while posets with outerplanar cover graphs have dimension at most \(4\)~\cite{felsner-trotter-wiechert}.
However, a famous construction by Kelly~\cite{kelly} gives for every \(n \ge 2\) a poset with planar cover graph, which contains the standard example \(S_n\) as a subposet (see Figure~\ref{fig:kelly}). This construction shows that posets with planar cover graphs can have arbitrarily large dimension.
\begin{figure}
  \includegraphics{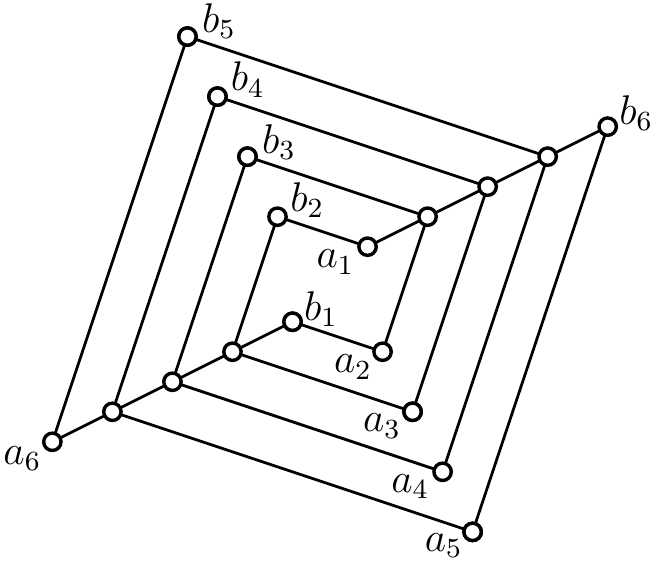}
  \caption{Kelly's construction of a poset of dimension \(6\)}\label{fig:kelly}.
\end{figure}



Bir\'o, Keller, and Young~\cite{biro-keller-young} showed that posets with cover graphs of pathwidth at most \(2\) have dimension at most \(17\).
This bound was later improved to \(6\) with a short proof by Wiechert~\cite{wiechert}.
On the other hand, posets with cover graphs of pathwidth (and treewidth) \(3\) can have arbitrarily large dimension, as witnessed by posets in Kelly's construction.
The question whether posets with cover graphs of treewidth at most \(2\) have bounded dimension was answered affirmatively by Joret et al.\ \cite{joret-et-al}.
They gave a complex and technical proof that every poset with a cover graph of treewidth at most \(2\) has dimension at most \(1276\).
The main contribution of this paper is a much simpler proof improving this bound.

\begin{theorem}\label{thm:main}
  Every finite poset whose cover graph has treewidth at most \(2\) has dimension at most \(12\).
\end{theorem}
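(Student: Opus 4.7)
The plan is to use the standard reformulation that $\dim(P)$ equals the minimum number of \emph{reversible} subsets of $\Inc(P)$ needed to cover it; here a subset $I \subseteq \Inc(P)$ is reversible if it contains no \emph{alternating cycle} $(a_1,b_1),\dots,(a_k,b_k)\in I$ with $b_i \le a_{i+1}$ in $P$ for all $i$ taken cyclically. Thus it suffices to colour $\Inc(P)$ with at most $12$ colours so that each colour class is reversible. We may assume the cover graph $G$ of $P$ is connected.

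First I would fix a tree decomposition $(T,\{B_t\}_{t\in V(T)})$ of $G$ with bags of size at most three, root $T$ at an arbitrary node, and for every vertex $v$ of $P$ let $t(v)$ be the topmost bag node containing $v$. For an incomparable pair $(a,b)$, consider the lowest common ancestor $s(a,b)$ of $t(a)$ and $t(b)$ in $T$: the bag $B_{s(a,b)}$ has at most three vertices, and $B_{s(a,b)} \setminus \{a,b\}$ is a separator between $a$ and $b$ in $G$. I would then colour $(a,b)$ by a small piece of order-theoretic data relative to this bag: which bag vertex is comparable with $a$ and which with $b$, and whether these comparabilities point upward or downward in $P$. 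With three bag vertices, the pair $\{a,b\}$ playing one of a few roles, and a couple of orientation bits, a natural enumeration of cases yields a constant bounded by $12$.

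The main obstacle, and where most of the effort will sit, is verifying reversibility of each colour class. Given an alternating cycle $(a_1,b_1),\dots,(a_k,b_k)$ in one class, I would use the chains in $P$ witnessing $b_i \le a_{i+1}$ together with the uniform colouring to argue that all separators $s(a_i,b_i)$ collapse: each chain $b_i \to a_{i+1}$ must cross the separator $B_{s(a_i,b_i)}$, and the common colour forces the crossings to line up in a consistent direction. Since $T$ is a tree, the cyclic sequence of separators cannot ``drift'' indefinitely, so the witnessing chains concatenate into a single chain in $P$ that starts and ends at the same vertex while strictly moving through some element comparable to it, contradicting the incomparability of one of the $(a_j,b_j)$. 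The structure of this argument mirrors Wiechert's pathwidth-$2$ proof, with the tree branching accounting for the jump from $6$ to $12$; carefully pinning down which of the bag vertices plays Wiechert's role on each side of an alternating step is what I expect to be the technically delicate part.
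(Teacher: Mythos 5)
Your outline identifies the right framework (reversible sets, alternating cycles, an LCA-based classification over a width-\(2\) tree-decomposition), but as written it has genuine gaps at both of the places where the actual work happens. First, the colouring is not well defined. ``Which bag vertex is comparable with \(a\) and which with \(b\), plus orientation bits'' does not obviously yield \(12\) classes --- a naive count over three bag vertices and two orientations per side gives far more, and you have not handled the case where \emph{no} vertex of \(B_{s(a,b)}\) lies in \(U(a)\) or in \(D(b)\) (these pairs exist and the paper must treat them separately, as its ``type \(1\)''). The paper gets \(12 = 4+8\) only by working with a much more structured decomposition: it passes to a series-parallel supergraph and builds an \emph{s-t tree-decomposition} on a binary tree in which every bag carries a distinguished source \(s_u\) and sink \(t_u\) satisfying inheritance rules between parent and children. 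The signature of a pair is then defined in terms of these terminals (which of \(s\), \(t\) is reached by \(U(x)\) versus \(D(y)\), whether some adhesion set \(\{s_u,t_u\}\) below the LCA is swallowed by \(U(x)\), and the in-order position of \(w(x)\) versus \(w(y)\)), and duality/reversal symmetries cut the case count in half twice. An arbitrary rooted width-\(2\) decomposition, with no canonical two terminals per bag and no in-order on the tree, does not support these definitions.

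Second, and more seriously, the reversibility verification --- which you correctly flag as the delicate part --- is not supplied, and the sketch you give does not survive contact with the actual difficulty. The claim that ``the witnessing chains concatenate into a single chain contradicting incomparability'' is essentially the argument for a path decomposition (Wiechert's setting); with a genuinely branching tree the LCA nodes \(w(x_i,y_i)\) of the pairs in an alternating cycle need not be pairwise comparable, and the chains from \(x_i\) to \(y_{i+1}\) need not cross a common separator in a consistent direction. The paper's argument is correspondingly more involved: it first rules out the case where all \(w(x_i,y_i)\) coincide (using the terminals of the two children of that node), and otherwise extracts from a single covering chain \(Q\) from \(x_1\) to \(y_2\) two nodes \(w_1,w_2\) with \(s_{w_1}\in Q,\ t_{w_1}\notin Q\) and \(t_{w_2}\in Q,\ s_{w_2}\notin Q\), produces from them a node \(v\) with \(\{s_v,t_v\}\subseteq U(x_1)\) and \(v\not\le w(x_1,y_1)\), and then shows by a separation argument that \emph{every} \(w(x_i,y_i)\) must lie below \(v\) --- including \(w(x_1,y_1)\), a contradiction. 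None of this ``trapping'' mechanism is recoverable from your outline, so the proposal should be regarded as a plausible plan rather than a proof; to complete it you would need to either reconstruct the s-t tree-decomposition machinery or find a substitute source of per-bag orientation data.
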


Let us briefly describe the idea of the proof.
Our proof is inspired by the proof by Felsner, Trotter and Wiechert~\cite{felsner-trotter-wiechert} that posets with outerplanar cover graphs have dimension at most \(4\).
In their proof, the set of all pairs of incomparable elements of a poset is partitioned into \(4\) sets, and the partition heavily uses the linear order of elements along the outer face of the cover graph.
These \(4\) sets give rise to \(4\) linear orders, which witness that the dimension is at most \(4\).

To generalize the proof from the outerplanar case, we use a characterization of graphs of treewidth \(2\) as subgraphs of series-parallel graphs.
Series-parallel graphs have natural partial orderings of the vertices, and if a series-parallel graph is outerplanar, then this partial ordering can be chosen so that it coincides with the order of vertices on the outer face in some outerplanar drawing.

The recursive definition of series-parallel graph is not very convenient to work with, so instead using it directly in our proof, we use it to construct a special tree-decomposition, in which every bag has two distinguished vertices, which satisfy some additional properties.
Such a special tree-decomposition can be easily constructed without explicitly mentioning series-parallel graphs, but constructing it from the definition of series-parallel graphs allows to notice analogies to the proof for outerplanar cover graphs.

The bound \(12\) in our proof significantly improves the one from~\cite{joret-et-al}.
It is likely still not optimal because the largest known dimension of a poset with cover graph of treewidth \(2\) is \(4\).
However, we think that improving that bound is not of that much importance, and that the main advantage of our proof is its simplicity. 
At this point it seems more interesting to improve the lower bound for the optimal constant in Theorem~\ref{thm:main}. 
The best known lower bound \(4\) is reached by a poset with an outerplanar cover graph~\cite{felsner-trotter-wiechert}.
It seems that the class of outerplanar graphs is much more restricted than the class of graph of treewidth at most \(2\), and we believe that there should exist a poset with cover graph of treewidth \(2\) of dimension greater than \(4\).

\section{Poset preliminaries}

All posets considered in this paper are finite.
Let \(P\) be a poset.
For every element \(x\) of \(P\), the \emph{upset} of \(x\) in \(P\), denoted \(U_P(x)\), is the set of all elements \(y\) of \(P\) such that \(y \ge x\) in \(P\).
Similarly, the \emph{downset} of \(x\) in \(P\), denoted \(D_P(x)\), is the set of all elements \(y\) of \(P\) such that \(y \le x\) in \(P\).
In both cases, we drop subscript \(P\) when the poset is clear from the context.
The \emph{dual} of \(P\) is the poset \(P^d\) with the same ground set as \(P\), such that \(x \le y\) in \(P^d\) whenever \(y \le x\) in \(P\).
Clearly, \(P\) and \(P^d\) have the same cover graph and dimension.

A \emph{chain} in \(P\) is a set of linearly ordered elements of \(P\).
For two comparable elements \(x\) and \(y\) of \(P\) such that \(x \le y\) in \(P\), we define a \emph{covering chain} from \(x\) to \(y\) as a chain consisting of points \(x_1, \dots, x_k\) such that \(x_1 = x\), \(x_k = y\), and for every \(i \in \{2, \dots, k\}\), the point \(x_{i}\) covers \(x_{i-1}\) in \(P\).
Every covering chain from \(x\) to \(y\) induces a path between \(x\) and \(y\) in the cover graph of \(P\).
Every upset \(U(x)\) is a union of covering chains from \(x\) to the elements of \(U(x)\).
Hence every upset (and, by a dual argument, every downset) induces a connected subgraph of the cover graph.

An \emph{incomparable pair} in \(P\) is an ordered pair \((x, y)\) of incomparable elements in \(P\).
The set of all incomparable pairs in \(P\) is denoted by \(\Inc(P)\).
A \emph{linear extension} of \(P\) is a linear order \(L\) on the ground set of \(P\) such that \(x \le y\) in \(L\) for any two elements \(x\) and \(y\) of \(P\) such that \(x \le y\) in \(P\). 
A set \(I \subseteq \Inc(P)\) is \emph{reversible} in \(P\) if there exists a linear extension \(L\) of \(P\) such that \(y < x\) in \(L\) for every \((x, y) \in I\). The set \(I\) is reversible in \(P\) if and only if the set \(I^{-1} = \{(y, x) : (x, y) \in I\}\) is reversible in \(P^d\).
Here is a common and useful rephrasing of the definition of poset dimension.

\begin{observation}\label{obs:reversible-sets-and-dimension}
  If the dimension of a poset \(P\) is at least \(2\), then it is equal to the least positive integer \(d\) such that \(\Inc(P)\) can be partitioned into \(d\) reversible sets.
\end{observation}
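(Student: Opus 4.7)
The plan is to prove the equality by showing both inequalities between \(\dim(P)\) and the minimum number of reversible sets in a partition of \(\Inc(P)\). Both directions reduce to a routine translation between the ``intersection of linear extensions'' viewpoint of dimension and the ``reversing incomparabilities'' viewpoint.

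For the upper bound on the partition number, I would fix \(d = \dim(P)\) linear extensions \(L_1, \dots, L_d\) whose intersection is exactly \(P\), and assign each incomparable pair \((x,y) \in \Inc(P)\) to the set \(I_i\) where \(i\) is the smallest index for which \(y < x\) in \(L_i\). Such an \(i\) exists because \(x \not\le y\) in \(P\) forces some \(L_i\) to reverse the pair, and the assignment gives a partition \(\Inc(P) = I_1 \cup \dots \cup I_d\) in which each \(I_i\) is reversed by \(L_i\), hence reversible.

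For the reverse direction, suppose \(\Inc(P)\) is partitioned into \(d\) reversible sets \(I_1, \dots, I_d\) with witnessing linear extensions \(L_1, \dots, L_d\). I would verify that \(P\) equals the intersection of the \(L_i\): any comparability of \(P\) persists in every linear extension by definition, and conversely, if \(x\) and \(y\) are incomparable in \(P\), then one of \((x,y), (y,x)\) belongs to some \(I_i\), so the corresponding \(L_i\) orders them opposite to the desired comparability, keeping them incomparable in the intersection. This yields \(\dim(P) \le d\).

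The hypothesis \(\dim(P) \ge 2\) is only needed to rule out the degenerate case: if \(P\) were a chain, \(\Inc(P)\) would be empty and partitioned into \(0\) reversible sets while \(\dim(P) = 1\). Under the hypothesis, \(P\) has an incomparable pair \(\{x,y\}\), and since both \((x,y)\) and \((y,x)\) lie in \(\Inc(P)\) and no single linear extension can reverse both, any reversible partition uses at least \(2\) sets, matching \(\dim(P) \ge 2\). No substantive obstacle is anticipated; the entire argument is bookkeeping, with the only point meriting care being the insistence on choosing exactly one index per pair so that the \(I_i\) genuinely partition \(\Inc(P)\).
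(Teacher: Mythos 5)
Your argument is correct; it is the standard folklore translation between the two formulations of dimension, which is exactly what the paper has in mind (the paper states this observation without proof, citing it as a common rephrasing). Both directions and the role of the hypothesis \(\dim(P)\ge 2\) are handled appropriately, with only the cosmetic caveat that in the lower-bound direction \emph{both} ordered pairs \((x,y)\) and \((y,x)\) lie in \(\Inc(P)\), and it is the two (possibly different) witnessing extensions for their respective parts that keep \(x\) and \(y\) incomparable in the intersection.
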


For \(n \geq 2\), an indexed family \(\{(x_i, y_i) : i \in \{1, \dots, n\}\}\) of incomparable pairs in \(P\) is an \emph{alternating cycle} in \(P\) if \(x_i \leq y_{i+1}\) for \(i\in\{1,\dots,n\}\) (in alternating cycles we always interpret indices cyclically, so \(x_n \leq y_1\) is required).
An alternating cycle \(\{(x_i, y_i) : i \in \{1, \dots, n\}\}\) is \emph{strict} if for any pair of indices \(i\in \{1, \dots, n\}\) and \(j \in \{1, \dots, n\}\), we have \(x_i \leq y_j\) if and only if \(j = i+1\) (cyclically, as always).
The following is an easy and well-known characterization of reversible sets in terms of strict alternating cycles, originally observed by Trotter and Moore~\cite{trotter-moore}.

\begin{observation}\label{obs:reversible-sets-characterization}
  A set \(I \subseteq \Inc(P)\) is reversible in \(P\) if and only if there is no strict alternating cycle in \(P\) with all pairs from \(I\).
\end{observation}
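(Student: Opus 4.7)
The plan is to prove both directions of the equivalence. For the forward direction, suppose $I$ is reversible with witness linear extension $L$, so $y <_L x$ for every $(x,y)\in I$. If a strict alternating cycle $\{(x_i,y_i): i \in \{1,\dots,n\}\}$ with pairs in $I$ existed, then chaining $y_i <_L x_i \leq_L y_{i+1}$ cyclically (the first inequality from reversibility, the second because $L$ extends $P$ and $x_i \le y_{i+1}$ in $P$) would yield the impossible $y_1 <_L y_1$. Note that strictness is not even needed here; the forward direction uses only the alternating-cycle structure.

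For the backward direction, I would construct a linear extension $L$ of $P$ that reverses every pair of $I$. Consider the binary relation $R$ on the ground set of $P$ consisting of $\leq_P$ together with the pairs $(y,x)$ for $(x,y)\in I$, and let $R^*$ be its reflexive-transitive closure. If $R^*$ is antisymmetric, then $R^*$ is a partial order extending $P$ in which $y \leq_{R^*} x$ for every $(x,y) \in I$, and any linear extension of $R^*$ serves as $L$. So the task reduces to ruling out a nontrivial directed cycle among the generators of $R$.

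Suppose such a cycle exists. Since $\leq_P$ is itself acyclic, the cycle uses at least one $I$-reversal $y\to x$; contracting maximal runs of consecutive $\leq_P$-steps via transitivity yields a cyclic sequence $y_1\to x_1\leq_P y_2 \to x_2 \leq_P \dots \to x_n \leq_P y_1$ with each $(x_i,y_i)\in I$. The case $n=1$ is impossible since it would give $x_1 \leq_P y_1$, contradicting $(x_1,y_1)\in\Inc(P)$, so $n\geq 2$ and the family $\{(x_i,y_i)\}$ is an alternating cycle in $P$ with pairs from $I$. Choose such a cycle of minimum length. If it were not strict, there would be indices $i,j$ with $j\not\equiv i+1\pmod n$ and $x_i\leq_P y_j$; the case $j\equiv i\pmod n$ is excluded by incomparability of $x_i$ and $y_i$, so discarding the pairs $(x_{i+1},y_{i+1}),\dots,(x_{j-1},y_{j-1})$ and using the shortcut $x_i\leq_P y_j$ produces a strictly shorter alternating cycle in $I$, contradicting minimality. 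Hence the minimum alternating cycle must be strict, contradicting the hypothesis. The main technical point is the cyclic index bookkeeping that certifies the shortcut always yields a valid alternating cycle of length between $2$ and $n-1$; both boundary cases ($j\equiv i$ and $j\equiv i+1$) are precisely what the incomparability of $(x_i,y_i)$ and the non-strictness assumption, respectively, rule out.
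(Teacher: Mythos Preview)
The paper does not actually prove this observation; it is stated as a well-known fact due to Trotter and Moore and left without proof. Your argument is the standard one and is correct: the forward direction is immediate from chaining inequalities in the witnessing linear extension, and for the converse you correctly reduce reversibility of \(I\) to acyclicity of the relation generated by \(\le_P\) together with the reversed pairs, then extract from a hypothetical cycle an alternating cycle in \(I\) and refine it to a strict one by taking a shortest such cycle. The boundary analysis is sound: a length-\(1\) cycle is ruled out by incomparability, and for \(n\ge 2\) a non-strict witness \((i,j)\) with \(j\not\equiv i,i{+}1\pmod n\) always yields a shorter alternating cycle of length at least \(2\) (you keep the pairs indexed \(j,j{+}1,\dots,i\) cyclically, which includes the two distinct indices \(i\) and \(j\)). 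One small point you left implicit: when two reversal steps are adjacent in the contracted cycle, the required relation \(x_i\le_P y_{i+1}\) holds with equality, so the alternating-cycle condition is still satisfied.
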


\section{Treewidth \(2\) and series-parallel graphs}\label{sec:series-parallel}

All graphs considered in this paper are finite and simple. A \emph{tree-decomposition} of a graph \(G\) is a pair \((T, \{Y_u : u \in V(T)\})\), where \(T\) is a tree and \(\{Y_u : u \in V(T)\}\) is an indexed family of subsets of \(V(G)\) called \emph{bags} which satisfies the following conditions:
\begin{enumerate}
  \item[(T1)] \(V(G) = \bigcup_{u \in V(T)} Y_u\),
  \item[(T2)] for every edge \(xy\) of \(G\) there exists a node \(u\) of \(T\)
  such that \(\{x, y\} \subseteq Y_u\), and
  \item[(T3)] for every three nodes \(u_1\), \(u_2\) and \(v\) of \(T\), if \(v\) lies on the path between \(u_1\) and \(u_2\) in \(T\), then \(Y_{u_1} \cap Y_{u_2} \subseteq Y_{v}\).
\end{enumerate}

By property (T3), for every vertex \(x\) of \(G\), the nodes \(u\) such that \(x \in Y_u\) induce a subtree of \(T\).
In particular, if the tree \(T\) is rooted, then among all nodes \(u\) such that \(x \in Y_u\) there exists a unique one closest to the root.

The \emph{width} of a tree-decomposition \((T, \{Y_u : u \in V(T)\})\) is \[\max \{|Y_u| - 1 : u \in V(T)\}.\]
The \emph{treewidth} of a graph \(G\) is the least width of a tree-decomposition of \(G\).

The following is a well-known property of tree-decompositions:

\begin{observation}\label{obs:tree-decomposition-separations}
  Let \((T, \{Y_u : u \in V(T)\})\) be a tree-decomposition of a graph \(G\), let \(u_1\) and \(u_2\) be two nodes of \(T\), let \(v_1 v_2\) be an edge on the path between \(u_1\) and \(u_2\) in \(T\), and let \(H\) be a connected subgraph of \(G\).
  If \(V(H) \cap Y_{u_1} \neq \emptyset\) and \(V(H) \cap Y_{u_2} \neq \emptyset\), then \(V(H) \cap (Y_{v_1} \cap Y_{v_2}) \neq \emptyset\).
\end{observation}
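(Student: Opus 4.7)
The plan is to exploit the fact that, for each vertex \(x\) of \(G\), the set of nodes \(T(x) = \{u \in V(T) : x \in Y_u\}\) is a nonempty connected subtree of \(T\) (this is property (T3), as already noted in the text). Combined with the observation that removing the edge \(v_1 v_2\) from \(T\) splits \(T\) into two subtrees \(T_1\) and \(T_2\) with \(v_1 \in V(T_1)\), \(v_2 \in V(T_2)\), the only way a connected subtree of \(T\) can meet both \(V(T_1)\) and \(V(T_2)\) is by containing both \(v_1\) and \(v_2\). This is exactly the mechanism that forces a vertex of \(H\) into \(Y_{v_1} \cap Y_{v_2}\).

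Concretely, I would first set up notation: let \(T_1\) and \(T_2\) be the two components of \(T - v_1 v_2\), and assume without loss of generality that \(u_1 \in V(T_1)\) and \(u_2 \in V(T_2)\) (this is legitimate because \(v_1 v_2\) lies on the \(u_1\)--\(u_2\) path). Using connectedness of \(H\) together with the hypothesis \(V(H) \cap Y_{u_1} \neq \emptyset\) and \(V(H) \cap Y_{u_2} \neq \emptyset\), choose a path \(x_0, x_1, \dots, x_k\) in \(H\) with \(x_0 \in Y_{u_1}\) and \(x_k \in Y_{u_2}\). By (T2), for every \(i \in \{0,\dots,k-1\}\) there is a node whose bag contains \(\{x_i, x_{i+1}\}\), hence \(T(x_i) \cap T(x_{i+1}) \neq \emptyset\).

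Next I would run the following case analysis. Let \(i\) be the largest index for which \(T(x_i) \cap V(T_1) \neq \emptyset\); such an \(i\) exists because \(u_1 \in T(x_0) \cap V(T_1)\). If \(T(x_i)\) also meets \(V(T_2)\), then since \(T(x_i)\) is a subtree of \(T\) and the only edge of \(T\) joining \(V(T_1)\) and \(V(T_2)\) is \(v_1 v_2\), the subtree \(T(x_i)\) must contain both \(v_1\) and \(v_2\); therefore \(x_i \in Y_{v_1} \cap Y_{v_2}\), and we are done. Otherwise \(T(x_i) \subseteq V(T_1)\); since \(T(x_k)\) contains \(u_2 \in V(T_2)\), we have \(i < k\), and by maximality of \(i\) the subtree \(T(x_{i+1})\) is disjoint from \(V(T_1)\), i.e., \(T(x_{i+1}) \subseteq V(T_2)\). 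But then \(T(x_i)\) and \(T(x_{i+1})\) lie in disjoint parts of \(T\), contradicting \(T(x_i) \cap T(x_{i+1}) \neq \emptyset\).

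There is no real obstacle here; the statement is a standard fact about tree-decompositions. The only subtlety is ensuring that the argument produces the exact conclusion \(V(H) \cap (Y_{v_1} \cap Y_{v_2}) \neq \emptyset\) rather than merely \(V(H) \cap Y_{v_1} \neq \emptyset\) or \(V(H) \cap Y_{v_2} \neq \emptyset\), and this is handled precisely by the observation that a connected subtree of \(T\) crossing the edge \(v_1 v_2\) must contain both its endpoints.
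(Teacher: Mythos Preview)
Your argument is correct and is precisely the standard proof of this well-known separation property of tree-decompositions. The paper itself does not supply a proof for this observation; it merely records it as ``a well-known property of tree-decompositions,'' so there is no paper proof to compare against.
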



A \emph{two-terminal graph} is a triple \((G, s, t)\), where \(G\) is a graph, \(s\) is a vertex of \(G\) called \emph{source}, and \(t\) is a vertex of \(G\) called \emph{sink}.
Let \((G_1, s_1, t_1)\) and \((G_2, s_2, t_2)\) be two-terminal graphs such that \(E(G_1) \cap E(G_2) = \emptyset\).
If \(V(G_1) \cap V(G_2) = \{t_1\} = \{s_2\}\), then the \emph{series composition}  of \((G_1, s_1, t_1)\) and \((G_2, s_2, t_2)\) is the two-terminal graph \[
\bfS((G_1, s_1, t_1), (G_2, s_2, t_2)) = (G_1 \cup G_2, s_1, t_2).\]
If \(s_1 = s_2 = s\), \(t_1 = t_2 = t\) and \(V(G_1) \cap V(G_2) = \{s, t\}\), then the \emph{parallel composition} of \((G_1, s_1, t_1)\) and \((G_2, s_2, t_2)\) is the two-terminal graph \[\bfP((G_1, s_1, t_1), (G_2, s_2, t_2)) = (G_1 \cup G_2, s, t).\]
\begin{figure}
  \includegraphics{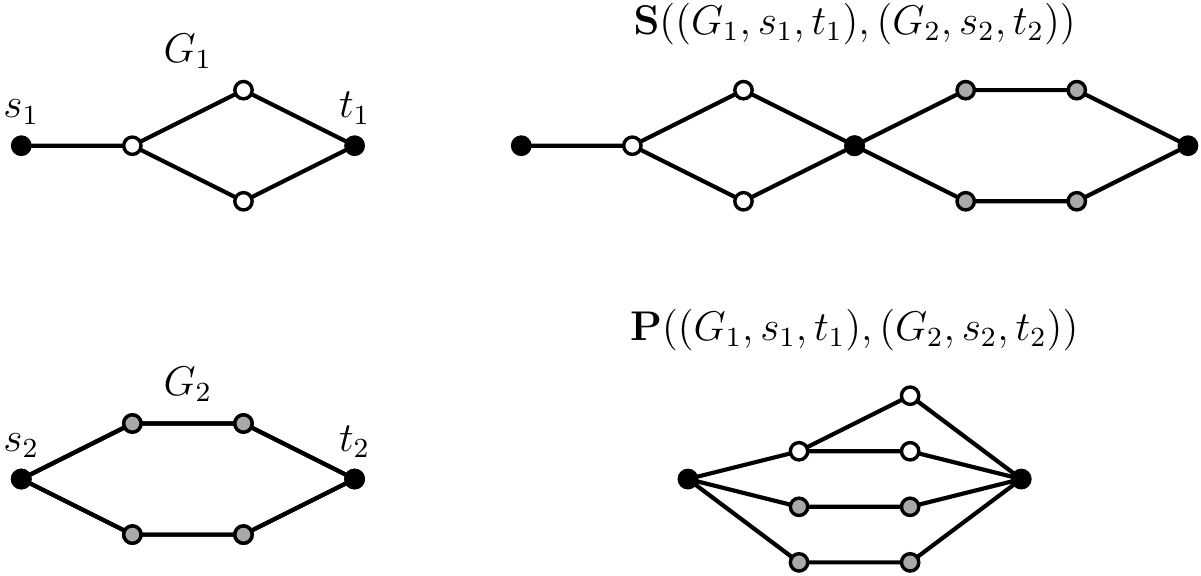}
  \caption{Series and parallel compositions of two-terminal graphs.}\label{fig:series-parallel}.
\end{figure}

A \emph{basic two-terminal series-parallel graph} is a two-terminal graph \((G, s, t)\) such that \(G\) is a complete graph on two vertices \(s\) and \(t\).
A \emph{two-terminal series-parallel graph} is a two-terminal-graph which can be constructed by a sequence of series and parallel compositions from basic two-terminal series-parallel graphs.
A graph \(G\) is \emph{series-parallel} if one can choose a source \(s\) and a sink \(t\) so that \((G, s, t)\) is a two-terminal series-parallel graph.
The proof of the following characterization of graphs of treewidth at most \(2\) can be found for example in~\cite{brandstaedt-le-spinrad}.
\begin{theorem}\label{thm:tw2-series-parallel}
  A graph has treewidth at most \(2\) if and only if it is a subgraph of a series-parallel graph.
\end{theorem}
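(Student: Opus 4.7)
The plan is to prove the two directions of the equivalence separately. The easier direction is to show that every two-terminal series-parallel graph has treewidth at most~\(2\), which together with the obvious fact that taking subgraphs cannot increase treewidth gives the ``if'' direction; the harder direction is to show that every graph of treewidth at most~\(2\) sits inside some series-parallel graph.

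For the ``if'' direction, I would strengthen the statement and prove by induction on the recursive construction that every two-terminal series-parallel graph \((G,s,t)\) admits a tree-decomposition of width at most~\(2\) some bag of which contains both \(s\) and \(t\). The basic graph \(K_2\) on \(\{s,t\}\) has the one-node decomposition with bag \(\{s,t\}\). For a series composition along the shared vertex \(m=t_1=s_2\), I would take the inductive decompositions of \((G_1,s_1,t_1)\) and \((G_2,s_2,t_2)\), introduce a new node whose bag is \(\{s_1,m,t_2\}\), and connect it to the nodes whose bags contain \(\{s_1,m\}\) and \(\{m,t_2\}\) respectively; (T1)--(T3) are routine to check, and the new bag contains \(\{s,t\}\). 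For a parallel composition, I would simply identify the two bags containing \(\{s,t\}=\{s_1,t_1\}=\{s_2,t_2\}\).

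For the ``only if'' direction, I would first reduce to the case where \(G\) is \(2\)-connected: each block of \(G\) can be treated separately, and the resulting series-parallel graphs can be glued along the block-cut tree and then extended to a single series-parallel graph (for instance by routing them in series through chosen vertices of adjacent blocks). For a \(2\)-connected \(G\) of treewidth at most~\(2\), I would pick two arbitrary vertices \(s,t\in V(G)\) and show \((G,s,t)\) is two-terminal series-parallel by induction on \(|V(G)|\). If \(G\) has a \(2\)-vertex cut \(\{u,v\}\), I would split \(G-\{u,v\}\) into components, form the subgraphs on each component together with \(\{u,v\}\) (adding the edge \(uv\) if necessary, which does not increase treewidth beyond~\(2\) since it corresponds to an entry already present in some bag of any width-\(2\) tree-decomposition), apply the inductive hypothesis to each with terminals \(u,v\), and combine them by parallel composition; a single outer series composition then produces the desired terminals \(s,t\). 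If \(G\) is \(3\)-connected, I would invoke the standard fact that a \(3\)-connected simple graph on at least four vertices contains a \(K_4\) minor, which contradicts treewidth \(\le 2\); so \(|V(G)|\le 3\) and the remaining case \(G=K_3\) is a direct parallel-of-series construction.

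The main obstacle I expect is the ``only if'' direction, and specifically two bookkeeping points: first, showing that in the block-decomposition reduction one can choose terminals consistently so that the blocks glue into a single two-terminal series-parallel graph rather than merely a graph each of whose blocks is series-parallel; and second, justifying in the \(2\)-cut step that the chosen pair \((s,t)\) always ends up as the terminals in the combined construction regardless of whether \(s\) and \(t\) lie on the same side of the cut, which forces a small case analysis on the positions of \(s\) and \(t\) relative to \(\{u,v\}\). The \(K_4\)-minor fact used in the \(3\)-connected base case is itself a short lemma but is worth stating separately because it is where the numerical bound \emph{treewidth \(\le 2\)} is actually used.
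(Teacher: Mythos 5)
First, a point of comparison: the paper does not prove Theorem~\ref{thm:tw2-series-parallel} at all --- it is stated as a classical fact with a pointer to the literature~\cite{brandstaedt-le-spinrad}. So there is no proof in the paper to measure yours against; what follows is an assessment of your argument on its own terms.

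Your ``if'' direction is essentially sound, with one slip: in the parallel-composition step, \emph{identifying} the two bags that contain \(\{s,t\}\) may produce a bag of size \(4\), since after a series composition the distinguished bag has the form \(\{s,m,t\}\) and thus carries a third vertex. Join the two nodes by a tree edge instead (their bag intersection across the two trees is contained in \(\{s,t\}\), so (T3) survives), or maintain the stronger invariant that some bag equals \(\{s,t\}\) exactly.

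The ``only if'' direction contains two genuine errors. The central claim --- that for a \(2\)-connected \(G\) of treewidth at most \(2\) and \emph{arbitrary} \(s,t\in V(G)\) the triple \((G,s,t)\) is two-terminal series-parallel --- is false. Take \(G=K_{2,3}\) with parts \(\{a,b\}\) and \(\{x,y,z\}\) and terminals \(s=x\), \(t=y\): if \((K_{2,3},x,y)\) were two-terminal series-parallel, then \(K_{2,3}+xy\) would have no \(K_4\)-minor, but contracting the edge \(az\) exhibits one on the branch sets \(\{a,z\},\{b\},\{x\},\{y\}\). The correct statement requires \(s\) and \(t\) to be adjacent, and once the terminals are forced to be an edge your plan to finish with ``a single outer series composition'' producing two prescribed terminals no longer makes sense --- series composition cannot relocate the terminals of an already-built graph to two chosen internal vertices. (The theorem only asks that \(G\) be a \emph{subgraph} of a series-parallel graph, so you never actually need to control the terminals, but then the induction must be restructured around that weaker goal, and the same freedom is what you need --- and do not yet use --- to glue blocks that attach to their neighbours at more than two cut vertices.) Second, your justification for adding the edge \(uv\) across a \(2\)-cut, namely that \(\{u,v\}\) lies in some bag of \emph{any} width-\(2\) tree-decomposition, is also false: for the \(4\)-cycle \(u,a,v,b\), the pair \(\{u,v\}\) is a \(2\)-cut, yet the width-\(2\) tree-decomposition with bags \(\{u,a,b\}\) and \(\{a,b,v\}\) contains no bag with both \(u\) and \(v\). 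The conclusion you want is still true for \(2\)-connected \(G\), but it needs a minor-based argument: a \(K_4\)-minor of \(G+uv\), being \(3\)-connected, would live in \(G[C\cup\{u,v\}]+uv\) for a single component \(C\) of \(G-\{u,v\}\), and that graph is a minor of \(G\) because \(u\) and \(v\) are joined by a path through another component. The standard clean route, and presumably the one in the cited reference, is instead to complete \(G\) to a \(2\)-tree and verify that every \(2\)-tree is series-parallel with respect to any of its edges.
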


In this paper, by \emph{binary tree} we mean a rooted tree \(T\) where every internal node \(u \in V(T)\) has two children: a left child denoted by \(\ell_T(u)\)  and a right child denoted by \(r_T(u)\).
We omit subscript \(T\) when the tree is clear from context.


Let \((G, s, t)\) be a two-terminal series-parallel graph.
An \emph{s-t tree-decomposition} of \((G, s, t)\) is a pair
\((T, \{(Y_u, s_u, t_u) : u \in V(T)\})\) satisfying the following conditions:
\begin{enumerate}
  \item \((T, \{Y_u : u \in V(T)\})\) is a tree-decomposition of \(G\) of width at most \(2\) such that \(T\) is a binary tree,
  \item \(s_u\) and \(t_u\) are distinct elements of \(Y_u\) for \(u \in V(T)\), 
  \item \(s_v = s\) and \(t_v = t\) if \(v\) is the root of \(T\),
  \item for every leaf \(u\) of \(T\), we have \(|Y_u| = 2\),
  \item for every internal node \(u\) of \(T\), if \(|Y_u| = 2\), then \(s_{\ell(u)} = s_{r(u)} = s_u\) and \(t_{\ell(u)} = t_{r(u)} = t_u\), and
  \item for every internal node \(u\) of \(T\), if \(|Y_u| = 3\), then \(s_{\ell(u)} = s_u\), \(t_{\ell(u)} = s_{r(u)} \in Y_u\) and \(t_{r(u)} = t_u\).
\end{enumerate}

\begin{lemma}
  Every two-terminal series-parallel graph \((G, s, t)\) has an s-t tree-decomposition. 
\end{lemma}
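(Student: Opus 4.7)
The plan is to proceed by structural induction on the recursive construction of $(G, s, t)$ as a two-terminal series-parallel graph, building the s-t tree-decomposition in lockstep with the composition operations. For the base case, $G$ is the complete graph on $\{s, t\}$, and I take $T$ to be a single leaf $u$ with $Y_u = \{s, t\}$, $s_u = s$, $t_u = t$; all six numbered conditions then hold trivially.

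For the inductive step, assume $(G, s, t)$ is a series or parallel composition of two-terminal series-parallel graphs $(G_1, s_1, t_1)$ and $(G_2, s_2, t_2)$, each equipped with an s-t tree-decomposition $(T_i, \{(Y_u, s_u, t_u) : u \in V(T_i)\})$ by induction. I would introduce a fresh root $v$ and attach the root of $T_1$ as $\ell(v)$ and the root of $T_2$ as $r(v)$, then pick $(Y_v, s_v, t_v)$ so as to match the terminals at the two child roots, which by inductive condition (3) are exactly $(s_i, t_i)$. In the parallel case, $s_1 = s_2 = s$ and $t_1 = t_2 = t$, so setting $Y_v = \{s, t\}$, $s_v = s$, $t_v = t$ gives $|Y_v| = 2$ and condition (5) is immediate. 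In the series case, writing $m$ for the shared vertex $t_1 = s_2$, I set $Y_v = \{s_1, m, t_2\}$, $s_v = s_1$, $t_v = t_2$; then $|Y_v| = 3$ (the three vertices being distinct thanks to the invariant $s \neq t$ maintained in every two-terminal series-parallel graph) and condition (6) reads off at the children.

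It remains to check the tree-decomposition axioms for the enlarged object. Conditions (T1) and (T2) follow from the inductive versions together with $V(G) = V(G_1) \cup V(G_2)$ and $E(G) = E(G_1) \cup E(G_2)$; every bag has size at most $3$, so the width is at most $2$. The one condition requiring real attention is (T3): for each vertex of $G$, the nodes whose bag contains it must induce a subtree. This is inherited from the $T_i$ for vertices exclusive to one side. For vertices in $V(G_1) \cap V(G_2)$ --- which is $\{s, t\}$ in the parallel case and $\{m\}$ in the series case --- inductive conditions (2) and (3) force each shared vertex into the root bag of both $T_1$ and $T_2$, and by construction it also lies in $Y_v$, so the relevant subtrees glue together through $v$.

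The main obstacle is not mathematical depth but bookkeeping: systematically matching the six numbered conditions against the two composition patterns. The single conceptual observation that makes everything line up is that the vertices shared between $G_1$ and $G_2$ in a composition are exactly the designated terminals of the parts, which by the inductive statement always sit in the root bag of the corresponding $T_i$; this is precisely what lets (T3) compose, and it also forces the choice of $Y_v$, $s_v$, $t_v$ in both cases.
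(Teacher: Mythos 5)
Your proof is correct and follows essentially the same route as the paper: induction over the series/parallel construction, attaching the two inductively obtained trees as the left and right subtrees of a fresh root $v$ with $Y_v = \{s_1,t_1\}\cup\{s_2,t_2\}$, yielding $|Y_v|=3$ in the series case and $|Y_v|=2$ in the parallel case. Your explicit verification of (T3) via the shared terminals lying in the root bags is exactly the detail the paper leaves as ``straight-forward to see.''
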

\begin{proof}
  We prove the lemma by induction on the number of edges in \(G\).
  If \((G, s, t)\) is a basic two-terminal series-parallel graph, then let \(T\) be a binary tree consisting of one node \(v\), and let \((Y_v, s_v, t_v) = (\{s, t\}, s, t)\).
  Clearly, \((T, \{(Y_u, s_u, t_u) : u \in V(T)\})\) is an s-t tree-decomposition of \((G, s, t)\).

  Suppose that  \((G, s, t)\) is a series or parallel composition of two-terminal series-parallel graphs \((G_1, s_1, t_1)\) and \((G_2, s_2, t_2)\).
  Each of the graphs \(G_1\) and \(G_2\) has less edges than \(G\).
  By induction hypothesis, for \(i \in \{1, 2\}\) there exists an s-t tree-decomposition \((T^i, \{(Y_u^i, s_u^i, t_u^i) : u \in V(T^i)\})\) of \((G_i, s_i, t_i)\). We may assume that the trees \(T^1\) and \(T^2\) are disjoint. Let \(T\) be a tree with a root \(v\) such the subtree rooted at the left child of \(v\) is \(T^1\) and the subtree rooted at the right child of \(v\) is \(T^2\).
  Let \(Y_v = \{s_1, t_1\} \cup \{s_2, t_2\}\), \(s_v = s\) and \(t_v = t\).
  In particular, if \((G, s, t)\) is a series composition of \((G_1, s_1, t_1)\) and \((G_2, s_2, t_2)\), then \(|Y_v| = 3\), and if \((G, s, t)\) is a parallel composition of \((G_1, s_1, t_1)\) and \((G_2, s_2, t_2)\), then \(|Y_v| = 2\).
  For \(i \in \{1, 2\}\) and a node \(u \in V(T^i)\), let \((Y_u, s_u, t_u) = (Y_u^i, s_u^i, t_u^i)\) 
  It is straight-forward to see that \((T, \{(Y_u, s_u, t_u) : u \in V(T)\})\) is an s-t tree-decomposition of \((G, s, t)\). This concludes the inductive proof.
\end{proof}

We consider a binary tree \(T\) as a partially ordered set where for two distinct nodes \(u\) and \(v\) of \(T\), we have \(u < v\) if and only if \(u\) is an ancestor of \(v\).
The \emph{lowest common ancestor} of two nodes \(u\) and \(v\), denoted \(u \wedge v\), is the greatest node \(w\) in \(T\) such that \(w \le u\) and \(w \le v\) in \(T\).
By \(\lein_T\) we denote the linear order on \(V(T)\) defined by the in-order tree traversal of \(T\), that is for any two nodes \(u\) and \(v\) of \(T\) we have \(u \lein_T v\)
if and only if \(r(u \wedge v) \not \le u\) and \(\ell(u \wedge v) \not \le v\) in \(T\).
We omit subscript \(T\) in \(\lein_T\) when the tree is clear from context.

\begin{lemma}\label{lem:wx}
  Let \((T, \{(Y_u, s_u, t_u) : u \in V(T)\})\) be an s-t tree-decomposition of a two-terminal series-parallel graph \((G, s, t)\), let  \(x \in V(G) \setminus \{s, t\}\) and let \(w\) be the least node in \(T\) such that \(x \in Y_w\). Then \(x \not \in \{s_w, t_w\}\).
\end{lemma}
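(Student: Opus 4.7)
The key observation I would first isolate is that for every non-root node $u$ of $T$ with parent $p$, both endpoints $s_u$ and $t_u$ lie in the parent bag $Y_p$. This is immediate from conditions (5) and (6) of the definition of s-t tree-decomposition: whether $|Y_p|=2$ or $|Y_p|=3$, every value appearing as $s_{\ell(p)}, t_{\ell(p)}, s_{r(p)}, t_{r(p)}$ is explicitly either $s_p$, $t_p$, or (in the $|Y_p|=3$ case) the third vertex of $Y_p$, all of which belong to $Y_p$.

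Given this observation, the lemma follows by contradiction. Suppose $x \in \{s_w, t_w\}$. The root $v$ satisfies $s_v = s$ and $t_v = t$ by condition (3), and the hypothesis $x \notin \{s, t\}$ rules out $w = v$. Hence $w$ is a non-root node, and it has a parent $p$ in $T$. By the observation above, $\{s_w, t_w\} \subseteq Y_p$, so $x \in Y_p$. But $p$ is a proper ancestor of $w$, meaning $p < w$ in the tree order on $V(T)$, and this contradicts the choice of $w$ as the least node of $T$ with $x \in Y_w$.

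There is really no obstacle here: the entire content of the proof is the bookkeeping in conditions (5) and (6), which was deliberately arranged so that the source and sink labels propagate downward from the parent bag. The only small point worth being careful about is to note explicitly that the set $\{u \in V(T) : x \in Y_u\}$ is a subtree (by property (T3) of tree-decompositions) and therefore has a well-defined minimum $w$ in the ancestor order, so the statement of the lemma is meaningful.
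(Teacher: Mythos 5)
Your proof is correct and follows essentially the same route as the paper's: split on whether $w$ is the root (using condition (3) and the hypothesis $x \notin \{s,t\}$) or not (using the fact, extracted from conditions (5) and (6), that $\{s_w, t_w\}$ is contained in the parent's bag, contradicting minimality of $w$). The only difference is cosmetic — you phrase it as a contradiction and spell out the well-definedness of $w$ via (T3), which the paper notes separately before the lemma.
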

\begin{proof}
  If \(w\) is the root of \(T\), then \(s_w = s\) and \(t_w = t\), so \(x \not \in \{s_w, t_w\}\).
  If \(w\) is not the root of \(T\), then let \(v\) be the parent of \(w\).
  We have \(\{s_w, t_w\} \subseteq Y_v\), so by the choice of \(w\), we have \(x \not \in \{s_w, t_w\}\).
\end{proof}

 \begin{figure}
   \includegraphics{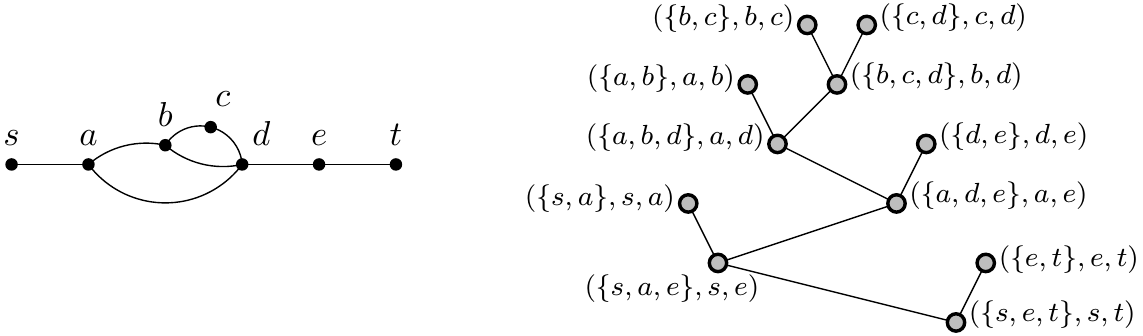}
   \caption{A two-terminal series-parallel graph \((G, s, t)\) and its s-t tree-decomposition.}
 \end{figure}

Given an s-t tree-decomposition \((T, \{(Y_u, s_u, t_u) : u \in V(T)\})\) of a two-terminal series-parallel graph \((G, s, t)\), we can \emph{reverse} it to obtain an s-t tree decomposition of \((G, t, s)\).
The reversed s-t tree-decomposition is \((T', \{(Y'_u, s'_u, t'_u) : u \in V(T')\})\), where \(T'\) is a binary tree with the same nodes having the same children as in \(T\), but for every internal node \(u\), we have \(\ell_{T'}(u) = r_T(u)\) and \(r_{T'}(u) = \ell_T(u)\), and for every node \(u\), we have \((Y'_u, s'_u, t'_u) = (Y_u, t_u, s_u)\).
The partial order defined by the tree \(T'\) is the same as the one defined by \(T\), but the in-order \(\lein_{T'}\) is the reverse of the in-order \(\lein_T\). 

\begin{lemma}\label{lem:decompostion-st-subset}
  Let \((T, \{(Y_u, s_u, t_u) : u \in V(T)\})\) be an s-t tree-decomposition of a two-terminal series-parallel graph \((G, s, t)\), let \(u_1\) and \(u_2\) be two nodes of \(T\), and let \(H\) be a connected subgraph of \(G\) such that \(\{s_{u_1}, t_{u_2}\} \subseteq V(H)\). If \(u_1\) and \(u_2\) are comparable in \(T\), then there exists a node \(v\) on the path between \(u_1\) and \(u_2\) in \(T\) such that \(\{s_v, t_v\} \subseteq V(H)\).
\end{lemma}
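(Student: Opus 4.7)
The plan is to induct on the length $k$ of the path from $u_1$ to $u_2$ in $T$. First, by applying the lemma to the reversed s-t tree-decomposition of $(G,t,s)$ with the pair $(u_2,u_1)$ if $u_2<u_1$ in $T$, we may assume $u_1\le u_2$ (the reversed decomposition preserves ancestry in $T$ but swaps the roles of $s$ and $t$, so that $\{s_{u_1},t_{u_2}\}\subseteq V(H)$ becomes $\{s'_{u_2},t'_{u_1}\}\subseteq V(H)$). The base case $k=0$ is immediate: take $v=u_1=u_2$.

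For the inductive step, let $v_1$ be the child of $u_1$ on the path to $u_2$. If $s_{v_1}=s_{u_1}$, which is the case when $|Y_{u_1}|=2$, or when $|Y_{u_1}|=3$ and $v_1=\ell(u_1)$, then $s_{v_1}\in V(H)$, and the inductive hypothesis applied to $(v_1,u_2)$ gives the desired $v$. Otherwise, $|Y_{u_1}|=3$ and $v_1=r(u_1)$, so $s_{v_1}=s_{r(u_1)}$ and $t_{v_1}=t_{u_1}$. I would then apply Observation~\ref{obs:tree-decomposition-separations} to $u_1,u_2$ and the edge $u_1v_1$, which produces a vertex of $V(H)$ lying in $Y_{u_1}\cap Y_{v_1}$.

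The main obstacle is pinning down the intersection $Y_{u_1}\cap Y_{v_1}$. The two-element set $\{s_{r(u_1)},t_{u_1}\}$ is clearly contained in it; the crux is to argue that $s_{u_1}\notin Y_{v_1}$, so that the intersection is exactly $\{s_{r(u_1)},t_{u_1}\}$. This uses the series-composition structure underlying the s-t tree-decomposition at $u_1$: the subtree $T_{r(u_1)}$ encodes a sub-two-terminal-series-parallel graph whose vertex set meets that coming from $T_{\ell(u_1)}$ only in the single vertex $s_{r(u_1)}$, so the vertex $s_{u_1}$, which belongs to the graph coming from $T_{\ell(u_1)}$ and is distinct from $s_{r(u_1)}$, cannot appear in any bag of $T_{r(u_1)}$, and in particular $s_{u_1}\notin Y_{v_1}$. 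Granted this, the vertex returned by the observation is either $t_{u_1}$, in which case $v:=u_1$ satisfies $\{s_v,t_v\}=\{s_{u_1},t_{u_1}\}\subseteq V(H)$, or $s_{r(u_1)}=s_{v_1}$, in which case $s_{v_1}\in V(H)$ and the inductive hypothesis applied to $(v_1,u_2)$ finishes the proof.
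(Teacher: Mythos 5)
Your proof is correct and essentially the same as the paper's: the paper also reduces to the case \(u_1 \le u_2\) by passing to the reversed s-t tree-decomposition, then applies Observation~\ref{obs:tree-decomposition-separations} across the edge between a node and its right child and identifies \(Y_v \cap Y_{r(v)}\) with \(\{s_{r(v)}, t_{r(v)}\}\); the only difference is that you descend the path by induction on its length, while the paper jumps directly to the greatest node \(v\) on the path with \(s_v \in V(H)\) and runs the same case analysis once. The step you single out as the crux, namely \(s_{u_1} \notin Y_{r(u_1)}\), is simply asserted in the paper's proof (as \(Y_v \cap Y_{v'} = \{s_{v'}, t_{v'}\}\)), so your justification via the series-composition structure is, if anything, more explicit than the original.
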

\begin{proof}
  Suppose first that \(u_1 \le u_2\) in \(T\).
  Let \(v\) be the greatest node in \(T\) such that \(u_1 \le v \le u_2\) and \(s_v \in V(H)\).
  If \(v = u_2\), then \(\{s_v, t_v\} = \{s_v, t_{u_2}\} \subseteq V(H)\), so the node \(v\) satisfies the lemma.
  Otherwise, let \(v'\) denote the child of \(v\) such that \(u_1 \le v < v' \le u_2\) in \(T\).
  By the choice of \(v\), we have \(s_{v'} \not \in V(H)\).
  In particular \(s_{v'} \neq s_{v}\), which implies \(|Y_{v}| = 3\), \(v' = r(v)\) and \(t_v = t_{v'}\).
  The nodes \(v\) and \(v'\) lie on the path between \(u_1\) and \(u_2\) in \(T\).
  By Observation~\ref{obs:tree-decomposition-separations} with \(u_1 = u_1\), \(u_2 = u_2\), \(v_1 = v\), \(v_2 = v'\) and \(H = H\), we have \(V(H) \cap (Y_{v} \cap Y_{v'}) \neq \emptyset\).
  Since \(Y_{v} \cap Y_{v'} = \{s_{v'}, t_{v'}\}\) and \(s_{v'} \not \in V(H)\), we have \(t_{v'} \in V(H)\).
  Hence \(\{s_v, t_v\} = \{s_{v}, t_{v'}\} \subseteq V(H)\), and the node \(v\) satisfies the lemma.

  Now suppose that \(u_2 < u_1\) in \(T\).
  In the reversed s-t tree-decomposition \((T', \{(Y_u, s'_u, t'_u) : u \in V(T')\})\) we have \(s'_{u_2} = t_{u_2} \in V(H)\) and \(t'_{u_1} = s_{u_1} \in V(H)\).
  By applying the earlier case to \((T', \{(G_u, s'_u, t'_u) : u \in V(T)\})\) and the nodes \(u_2\) and \(u_1\), we obtain a node \(v\) on the path between \(u_1\) and \(u_2\) such that \(\{s'_v, t'_v\} \subseteq V(H)\). Since \(s'_v = t_v\) and \(t'_v = s_v\), this implies \(\{s_v, t_v\} \subseteq V(H)\), as required.
\end{proof}

\section{The proof}

Let \(P\) be a poset with a cover graph of treewidth at most \(2\).
By Theorem~\ref{thm:tw2-series-parallel}, the cover graph of \(P\) is a subgraph of a series-parallel graph.
Let \((G, s, t)\) be a two-terminal series-parallel graph such that \(G\) contains the cover graph of \(P\) as a subgraph.
After replacing \((G, s, t)\) with its series composition with two basic two-terminal series-parallel graphs we assume that neither \(s\) nor \(t\) is an element of \(P\).  
Fix an s-t tree-decomposition \((T, \{(Y_u, s_u, t_u) : u \in V(T)\})\) of \((G, s, t)\).
For every node \(u\) of \(T\), such that \(|Y_u| = 3\), let \(m_u\) denote the only element of \(Y_u \setminus \{s_u, t_u\}\).
For every element \(x\) of \(P\), let \(w(x)\) denote the least node in \(T\) containing \(x\) in its bag. By Lemma~\ref{lem:wx}, for every element \(x\) of \(P\), we have \(|Y_{w(x)}| = 3\) and \(x = m_{w(x)}\). For every pair \((x, y) \in \Inc(P)\), let \(w(x, y) = w(x) \wedge w(y)\).

Let \((x,y) \in \Inc(P)\).
We say \((x, y)\) is of \emph{type \(1\)} if
\[
  U(x) \cap Y_{w(x, y)} = \emptyset \quad \text{or} \quad D(y) \cap Y_{w(x, y)} = \emptyset.
\]
Otherwise, we have
\[
  U(x) \cap Y_{w(x, y)} \neq \emptyset \quad \text{and} \quad D(y) \cap Y_{w(x, y)} \neq \emptyset,
\]
and we say \((x, y)\) is of \emph{type \(2\)}.
Let \(I_1\) and \(I_2\) denote the sets of incomparable pairs in \(P\) of type 1 and type 2, respectively.

By Observation~\ref{obs:reversible-sets-and-dimension}, to prove the theorem
it suffices to show that \(\Inc(P)\) can be partitioned into \(12\) reversible sets.
We will show that \(I_1\) can be partitioned into \(4\) reversible sets and \(I_2\) can be partitioned into \(8\) reversible sets.

For every incomparable pair \((x, y) \in \Inc(P)\), we have \(m_{w(x)} = x\) and \(m_{w(y)} = y\), so \(w(x) \neq w(y)\). Let
\[
\alpha(x, y) =
\begin{cases}
  1&\text{if \(w(x) \ltin w(y)\),}\\
  2&\text{if \(w(y) \ltin w(x)\).}\\
\end{cases}
\]

\begin{claim}\label{clm:alpha-1}
  Let \((x, y) \in \Inc(P)\) be a pair such that \(\alpha(x, y) = 1\), let \(u\) be a node of \(T\), and let \(H\) be a connected subgraph of \(G\) such that \(V(H) \cap Y_u \neq \emptyset\).
  \begin{enumerate}
    \item\label{itm:alpha-1-x} If \(x \in V(H)\) and \(\ell(w(x, y)) \not \le u\) in \(T\), then \(V(H) \cap \{s_{\ell(w(x, y))}, t_{\ell(w(x, y))}\} \neq \emptyset\).
    \item\label{itm:alpha-1-y} If \(y \in V(H)\) and \(r(w(x, y)) \not \le u\) in \(T\), then \(V(H) \cap \{s_{r(w(x, y))}, t_{r(w(x, y))}\} \neq \emptyset\).
  \end{enumerate}
\end{claim}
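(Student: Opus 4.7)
The plan is to prove part~(1) in full, then observe that~(2) follows by the symmetric argument. Abbreviate $w = w(x,y)$ and $\ell = \ell(w)$. The first step is to unpack the hypothesis $\alpha(x,y) = 1$: by the definition of the in-order $\lein$, the statement $w(x) \ltin w(y)$ translates to the two tree inequalities $r(w) \not\le w(x)$ and $\ell \not\le w(y)$ in $T$.

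Next, since $w \le w(x)$, the node $w(x)$ lies in the subtree of $T$ rooted at $w$, which is the disjoint union of $\{w\}$, the subtree rooted at~$\ell$, and the subtree rooted at $r(w)$. The inequality $r(w) \not\le w(x)$ excludes the last piece, producing the dichotomy \emph{either} $\ell \le w(x)$ \emph{or} $w(x) = w$.

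In the first branch, the hypothesis $\ell \not\le u$ places $u$ outside the subtree rooted at~$\ell$, so the path from $w(x)$ to $u$ in $T$ must cross the edge $\ell w$. A direct inspection of the construction of the s-t tree-decomposition (in the preceding lemma) shows that $Y_\ell \cap Y_w = \{s_\ell, t_\ell\}$ in both the $|Y_w|=2$ and $|Y_w|=3$ cases, so Observation~\ref{obs:tree-decomposition-separations} applied to $H$, $w(x)$, $u$, and the edge $\ell w$ yields $V(H) \cap \{s_\ell, t_\ell\} \ne \emptyset$, as required. In the second branch $w(x) = w$, Lemma~\ref{lem:wx} forces $|Y_w| = 3$ and $x = m_w$; axiom~(6) of an s-t tree-decomposition identifies $t_\ell = s_{r(w)}$ with the unique element of $Y_w \setminus \{s_w, t_w\}$, namely $m_w$. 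Hence $x = t_\ell \in V(H) \cap \{s_\ell, t_\ell\}$, and the conclusion is immediate.

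Part~(2) proceeds identically with the roles of $\ell$ and $r(w)$ swapped, using the symmetric inequality $\ell \not\le w(y)$ to obtain the dichotomy $r(w) \le w(y)$ or $w(y) = w$. I expect the only step with real content to be the translation of the in-order condition into the tree-theoretic dichotomy above; once that is extracted, both subcases reduce to a routine application of Observation~\ref{obs:tree-decomposition-separations}, combined with the invariant $Y_\ell \cap Y_w = \{s_\ell, t_\ell\}$ (and symmetrically $Y_{r(w)} \cap Y_w = \{s_{r(w)}, t_{r(w)}\}$) enjoyed by the constructed s-t tree-decomposition.
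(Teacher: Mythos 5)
Your proof is correct and follows essentially the same route as the paper's: unpack \(\alpha(x,y)=1\) into the dichotomy \(w(x)=w(x,y)\) or \(\ell(w(x,y))\le w(x)\), and apply Observation~\ref{obs:tree-decomposition-separations} across the edge between \(\ell(w(x,y))\) and \(w(x,y)\) (your shortcut in the branch \(w(x)=w(x,y)\), observing \(x=m_{w(x,y)}=t_{\ell(w(x,y))}\in V(H)\) directly, is a harmless simplification, and your appeal to symmetry for item~(\ref{itm:alpha-1-y}) matches the paper's use of the reversed decomposition).
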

\begin{proof}
  For the proof of item (\ref{itm:alpha-1-x}), suppose that \(x \in V(H)\) and \(\ell(w(x, y)) \not \le u\) in \(T\).
  Since \(\alpha(x, y) = 1\), either \(w(x) = w(x, y)\) or \(\ell(w(x, y)) \le w(x)\) in \(T\).
  Let \(v\) denote the node \(\ell(w(x))\) if \(w(x) = w(x, y)\), or the node \(w(x)\) if \(\ell(w(x, y)) \le w(x)\) in \(T\).
  Either way, we have \(\ell(w(x, y)) \le v\) in \(T\) and \(x \in Y_v\).
  Since \(\ell(w(x, y)) \le v\) and \(\ell(w(x, y)) \not \le u\) in \(T\), the nodes \(\ell(w(x, y))\) and \(w(x, y)\) lie on the path between \(v\) and \(u\) in \(T\).
  By Observation~\ref{obs:tree-decomposition-separations} with \(u_1 = v\), \(u_2 = u\), \(v_1 = \ell(w(x, y))\), \(v_2 = w(x, y)\) and \(H = H\), we have
  \(V(H) \cap \{s_{\ell(w(x, y))}, t_{\ell(w(x, y))}\} = V(H) \cap (Y_{\ell(w(x, y))} \cap Y_{w(x, y)}) \neq \emptyset\), as required.

  Now suppose that \(y \in V(H)\) and \(r(w(x, y)) \not \le u\) in \(T\).
  In the reversed decomposition \((T', \{(Y'_u, s'_u, t'_u) : u \in V(T')\})\), we have \(\alpha(y, x) = 1\) since \(w(y) \ltin_{T'} w(x)\). Moreover \(\ell_{T'}(w(y, x)) = r_T(w(x, y)) \not \le u\) in \(T'\), so by item (\ref{itm:alpha-1-x}) applied to the pair \((y, x)\), \(u\) and \(H\), we have \(V(H) \cap \{s_{r_T(w(x, y))}, t_{r_T(w(x, y))}\} = V(H) \cap \{s_{\ell_{T'}(w(x, y))}, t_{\ell_{T'}(w(x, y))}\}\neq \emptyset\).
  Item~(\ref{itm:alpha-1-y}) follows.
\end{proof}


\subsection{Pairs of type 1}

For every pair \((x, y) \in I_1\), we define
\[
\beta(x, y) =
\begin{cases}
  1&\text{if \(U(x) \cap Y_{w(x, y)} = \emptyset\),}\\
  2&\text{if \(U(x) \cap Y_{w(x, y)} \neq \emptyset\).}\\
\end{cases}
\]

The \emph{signature} of a pair \((x, y) \in I_1\) is \[\sigma_1(x, y) = (\alpha(x, y), \beta(x, y)).\]

To prove that \(I_1\) can be partitioned into \(4\) reversible sets, it remains to show that every set consisting of incomparable pairs of type \(1\) having the same signature is reversible.

Let \(I \subseteq \Inc(P)\) be a set such that for every \((x, y) \in I\), we have \(\sigma_1(x, y) = (\alpha, \beta)\) for some \((\alpha, \beta) \in \{1,2\}^2\).
We aim to show that \(I\) is reversible.
First we will show that it is enough to consider the case \((\alpha, \beta) = (1,1)\).

\begin{claim}\label{clm:I1-beta-1}
  If \(\beta = 2\), then in the dual poset \(P^d\) with \((T, \{(Y_u, s_u, t_u) : u \in V(T)\})\), each pair from \(I^{-1}\) is a pair of type 1 with signature \((3-\alpha, 3-\beta)\).
\end{claim}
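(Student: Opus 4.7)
The plan is to unwind the definitions carefully and observe that the whole statement is really a bookkeeping exercise about what dualizing $P$ and reversing the pair do to each of the three quantities in the signature: $w(x,y)$, $\alpha$, and $\beta$.

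First I would note that the s-t tree-decomposition and the functions $u \mapsto m_u$ depend only on $G$, not on the order in $P$. In particular, for every element $x$ of $P$, the node $w(x)$ is the same whether we work with $P$ or $P^d$, because $x = m_{w(x)}$ is characterized by the bag structure and the auxiliary vertices $s,t$, none of which change under dualization. Consequently $w(y,x) = w(y)\wedge w(x) = w(x,y)$ in $P^d$. From here the behaviour of $\alpha$ is immediate: $\alpha_{P^d}(y,x)=1$ iff $w(y)\prec w(x)$ iff $\alpha_P(x,y) = 2$, so $\alpha_{P^d}(y,x) = 3 - \alpha_P(x,y) = 3-\alpha$.

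Next I would use the basic identities $U_{P^d}(z) = D_P(z)$ and $D_{P^d}(z) = U_P(z)$ to translate the definitions of ``type'' and of $\beta$. Since $(x,y)\in I_1$ with $\beta(x,y)=2$, by definition $U_P(x)\cap Y_{w(x,y)}\neq\emptyset$ and, because the pair is of type~1, we must therefore have $D_P(y)\cap Y_{w(x,y)} = \emptyset$. Translating: for the pair $(y,x)$ in $P^d$ at the node $w(y,x)=w(x,y)$, we get
\[
  U_{P^d}(y)\cap Y_{w(y,x)} = D_P(y)\cap Y_{w(x,y)} = \emptyset,
\]
which shows that $(y,x)$ is of type~1 in $P^d$, and moreover that $\beta_{P^d}(y,x) = 1 = 3-\beta$.

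Putting the two computations together gives $\sigma_1^{P^d}(y,x) = (3-\alpha, 3-\beta)$, which is the claim. The whole argument is purely a translation through the definitions, so I do not expect any real obstacle; the only thing to be careful about is the observation in the first paragraph that the tree-decomposition data is the same for $P$ and $P^d$, which is what justifies identifying $w(x,y)$ on the two sides.
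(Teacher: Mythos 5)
Your proof is correct and follows essentially the same route as the paper: dualizing swaps upsets and downsets while leaving the tree-decomposition data (hence $w(\cdot)$ and $\alpha$) untouched, and type~1 with $\beta=2$ forces $D_P(y)\cap Y_{w(x,y)}=\emptyset$, which translates to $\beta(y,x)=1$ in $P^d$. In fact your version is slightly cleaner, since the paper's proof contains an apparent typo (``$U_{P^d}(y)\cap Y_{w(y,x)}\neq\emptyset$'' where ``$=\emptyset$'' is clearly intended).
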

\begin{proof}
  Suppose \(\beta = 2\), and let \((y, x) \in I^{-1}\).
  In \(P\) with \((T, \{(Y_u, s_u, t_u) : u \in V(T)\})\), the pair \((x, y)\) is of type \(1\) and \(\beta(x, y) = 2\), so \(D_P(y) \cap Y_{w(x, y)} = \emptyset\).
  Hence \(U_{P^d}(y) \cap Y_{w(y, x)} \neq \emptyset\). This implies that in \(P^d\) with \((T, \{(Y_u, s_u, t_u) : u \in V(T)\})\), the pair \((y, x)\) is of type \(1\) and \(\beta(y, x) = 1 = 3 - \beta\).
  The value of \(\alpha(y, x)\) does not depend on the poset, just on the s-t tree-decomposition, and \(\alpha(y, x) = 3 - \alpha(x, y) = 3-\alpha\).
\end{proof}

The set \(I\) is reversible in \(P\) if and only if \(I^{-1}\) is reversible in \(P^d\).
Hence by Claim~\ref{clm:I1-beta-1}, after possibly replacing \(P\) with \(P^d\) and \(I\) with \(I^{-1}\), we may assume \(\beta = 1\).

\begin{claim}\label{clm:I1-alpha-1}
  In \(P\) with the reversed s-t tree-decomposition \((T', \{(Y'_u, s'_u, t'_u) : u \in V(T')\})\), each pair from \(I\) is a pair of type \(1\) with signature \((3-\alpha, \beta)\).
\end{claim}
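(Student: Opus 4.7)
The plan is to argue that reversing the s-t tree-decomposition changes essentially nothing about the definitions involved in the signature except the value of $\alpha$, which flips. Concretely, I would verify one by one that $w(x)$, $w(x,y)$, $Y_{w(x,y)}$, $U_P(x)$, $D_P(y)$ are all invariant under the reversal, so that ``type $1$'' and the value of $\beta$ are preserved, and only $\alpha$ changes from $\alpha$ to $3-\alpha$.

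The key observation is that $T'$ has the same node set as $T$, the same ancestor-descendant relation (hence the same partial order and the same meet $\wedge$), and the same bags $Y'_u = Y_u$. Therefore for each element $x$ of $P$, the set of nodes $u$ with $x \in Y_u$ is the same subtree in $T$ and in $T'$, and its minimum (i.e.\ its root) is the same node, so $w(x)$ is unchanged. Consequently $w(x,y) = w(x) \wedge w(y)$ is unchanged, and $Y_{w(x,y)}$ is the same set. Since the poset $P$ itself is unchanged, the sets $U_P(x)$ and $D_P(y)$ are unchanged as well.

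Combining these invariants, for each $(x,y) \in I$, the conditions $U_P(x) \cap Y_{w(x,y)} = \emptyset$ and $D_P(y) \cap Y_{w(x,y)} = \emptyset$ have the same truth values with respect to $(T', \{(Y'_u, s'_u, t'_u) : u \in V(T')\})$ as with respect to the original decomposition. Hence $(x,y)$ remains of type $1$ in the reversed decomposition, and $\beta(x,y)$, whose value depends only on whether $U_P(x) \cap Y_{w(x,y)} = \emptyset$, is unchanged and equals $\beta$.

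It remains to compute $\alpha(x,y)$ in the reversed decomposition. Since the in-order $\preceq_{T'}$ is, by definition, the reverse of $\preceq_T$, we have $w(x) \prec_{T'} w(y)$ if and only if $w(y) \prec_T w(x)$. Thus the new value of $\alpha(x,y)$ is $3 - \alpha(x,y) = 3 - \alpha$, and the signature of $(x,y)$ in the reversed decomposition is $(3-\alpha, \beta)$, as claimed. There is no genuine obstacle here; the only care needed is to clearly separate what data in the decomposition depends on the tree structure and the bags (unchanged) versus what depends on the in-order traversal (reversed).
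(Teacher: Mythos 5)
Your proof is correct and follows the same approach as the paper: it observes that the reversal preserves the bags, the tree order, and hence \(w(x)\), \(w(x,y)\), and the type-\(1\)/\(\beta\) conditions, while the in-order (and thus \(\alpha\)) is reversed. Your write-up just spells out in more detail the invariances that the paper's two-line proof takes as immediate.
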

\begin{proof}
  Let \((x, y) \in I\).
  We have \(Y'_{w(x, y)} = Y_{w(x, y)}\), so  in \(P\) with \((T', \{(Y'_u, s'_u, t'_u) : u \in V(T')\})\), the pair \((x, y)\) is of type \(1\), and \(\beta(x, y) = \beta\).
  Moreover, since \(\ltin_{T}\) is the reverse of \(\ltin_{T'}\), we have \(\alpha(x, y) = 3 - \alpha\) in \(P\) with \((T', \{(Y'_u, s'_u, t'_u) : u \in V(T')\})\).
\end{proof}

By Claim~\ref{clm:I1-alpha-1}, after possibly replacing \((G, s, t)\) with \((G, t, s)\) and \((T, \{(Y_u, s_u, t_u) : u \in V(T)\})\) with \((T', \{(Y'_u, s'_u, t'_u) : u \in V(T')\})\), we may assume \((\alpha, \beta) = (1, 1)\).

We proceed to the proof that \(I\) is reversible.
Suppose to the contrary that \(I\) is not reversible.
By Observation~\ref{obs:reversible-sets-characterization}, there is an alternating cycle \(\{(x_i, y_i) : i \in \{1, \dots, n\}\}\) in \(P\) such that  \(\sigma_1(x_i, y_i) = (1, 1)\) for \(i \in \{1, \dots, n\}\).
Without loss of generality, we assume \(w(y_1) \lein w(y_2)\).
Since \(\beta(x_1, y_1) = 1\),  we have
\(
  U(x_1) \cap \{s_{\ell(w(x_1, y_1))}, t_{\ell(w(x_1, y_1))}\} = \emptyset
\).
Hence, by Claim~\ref{clm:alpha-1} item~(\ref{itm:alpha-1-x}) with \((x, y) = (x_1, y_1)\), \(u = w(y_2)\) and \(H = G[U(x_1)]\), 
we have \(\ell(w(x_1, y_1)) \le w(y_2)\) in \(T\).
Therefore \(w(y_2) \ltin w(x_1, y_1) \lein w(y_1)\), which contradicts the assumption \(w(y_1) \lein w(y_2)\).
Hence \(I\) is reversible.
This completes the proof that the set \(I_1\) can be partitioned into \(4\) reversible sets.

\subsection{Pairs of type 2}

For every pair \((x, y) \in I_2\), we define 
\[
\gamma(x, y) =
  \begin{cases}
    1\quad\text{if \(\{s_u, t_u\} \not \subseteq U(x)\) for every \(u \le w(x, y)\) in \(T\),}\\
    2\quad\text{if \(\{s_u, t_u\} \subseteq U(x)\) for some \(u \le w(x, y)\) in \(T\).}\\
  \end{cases}
\]

Let \((x, y) \in I_2\) be a pair such that \(|Y_{w(x, y)}| = 2\).
By definition of type \(2\),
the sets \(U(x) \cap Y_{w(x, y)}\) and \(D(y) \cap Y_{w(x, y)}\) are disjoint non-empty subsets of \(Y_{w(x, y)} = \{s_{w(x, y)}, t_{w(x, y)}\}\).
Hence, either \(s_{w(x, y)} \in U(x)\) and \(t_{w(x, y)} \in D(y)\), or \(t_{w(x, y)} \in U(x)\) and \(s_{w(x, y)} \in D(y)\).
For every pair \((x, y) \in I_2\) we define 
\[
\delta(x, y) = 
  \begin{cases}
    1&\text{if \(|Y_{w(x, y)}| = 2\), \(s_{w(x, y)} \in U(x)\) and \(t_{w(x, y)} \in D(y)\),}\\
    2&\text{if \(|Y_{w(x, y)}| = 2\), \(t_{w(x, y)} \in U(x)\) and \(s_{w(x, y)} \in D(y)\),}\\
    \alpha(x, y)&\text{if \(|Y_{w(x, y)}| = 3\).}
  \end{cases}
\]

The \emph{signature} of a pair \((x, y) \in I_2\) is
\[
  \sigma_2(x, y) = (\alpha(x, y), \gamma(x, y), \delta(x, y)).
\]

To complete the proof of the theorem, it remains to show that every set consisting of pairs of type \(2\) having the same signature is reversible.

Let \((\alpha, \gamma, \delta) \in \{1,2\}^3\) and let \(I \subseteq I_2\) be a set such that for every \((x, y) \in I\), we have \(\sigma_2(x, y) = (\alpha, \gamma, \delta)\).
We aim to show that \(I\) is reversible in \(P\).
First we will show that it is enough to consider the case \((\alpha, \gamma, \delta) = (1,1,1)\).

\begin{claim}\label{clm:no-u-prime}
  For every pair \((x, y) \in \Inc(P)\) there do not exist nodes \(u\) and \(u'\) such that \(u \le w(x, y)\) and \(u' \le w(x, y)\) in \(T\), \(\{s_u, t_u\} \subseteq U(x)\) and \(\{s_{u'}, t_{u'}\} \subseteq D(y)\).
\end{claim}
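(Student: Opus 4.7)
The plan is to suppose for contradiction that such nodes $u$ and $u'$ exist, and to exhibit an element of $U(x) \cap D(y)$; such an element $z$ would satisfy $x \leq z \leq y$ in $P$, giving the contradiction $x \leq y$.

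Since $u \leq w(x,y)$ and $u' \leq w(x,y)$ in $T$, both nodes lie on the root-to-$w(x,y)$ path and are therefore comparable in $T$. The case $u = u'$ is immediate: then $\{s_u, t_u\} \subseteq U(x) \cap D(y)$, contradiction. For the remaining case, replacing $P$ by $P^d$ swaps the roles of $U(x)$ and $D(y)$ (and hence of $u$ and $u'$) while preserving $T$, $w$, and $w(x,y)$, so without loss of generality I may assume $u < u'$ in $T$. Let $u_p$ be the parent of $u'$; then $u \leq u_p < u'$ in $T$, and because $u' \leq w(x,y) \leq w(x)$, the edge $u_p u'$ lies on the path from $u$ to $w(x)$ in $T$.

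Next I would apply Observation~\ref{obs:tree-decomposition-separations} with $H = G[U(x)]$, which is connected since upsets induce connected subgraphs of the cover graph, taking $u_1 = u$, $u_2 = w(x)$, $v_1 = u_p$, $v_2 = u'$. The hypotheses hold because $\{s_u, t_u\} \subseteq V(H) \cap Y_u$ and $x \in V(H) \cap Y_{w(x)}$, so the observation produces a vertex $z \in U(x) \cap Y_{u_p} \cap Y_{u'}$. Invoking the identity $Y_{u_p} \cap Y_{u'} = \{s_{u'}, t_{u'}\}$ — the structural fact about s-t tree-decompositions already used in the proof of Lemma~\ref{lem:decompostion-st-subset} — one has $z \in \{s_{u'}, t_{u'}\} \subseteq D(y)$, so $z \in U(x) \cap D(y)$, the desired contradiction.

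The main subtle point is justifying the intersection identity $Y_{u_p} \cap Y_{u'} = \{s_{u'}, t_{u'}\}$. The inclusion $\supseteq$ follows at once from conditions~(5)--(6) of the s-t tree-decomposition, while the reverse inclusion captures the fact that any \emph{extra} vertex of $Y_{u_p}$ (present only when $|Y_{u_p}| = 3$) cannot appear in the bag of its non-terminal child --- a property intrinsic to the s-t tree-decompositions produced by the recursive construction in the existence lemma, tacitly used elsewhere in the paper.
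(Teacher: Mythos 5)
Your proof is correct and follows essentially the same route as the paper's: both reduce to the case \(u < u'\) (the paper handles \(u' < u\) by a symmetric argument rather than by dualizing), apply Observation~\ref{obs:tree-decomposition-separations} across the edge between \(u'\) and its parent with \(H = G[U(x)]\), and use \(Y_{u_p} \cap Y_{u'} = \{s_{u'}, t_{u'}\}\) to produce an element of \(U(x) \cap D(y)\). The bag-intersection identity you flag is indeed invoked without comment in the paper as well (already in the proof of Lemma~\ref{lem:decompostion-st-subset}), so your treatment of that step is, if anything, slightly more careful.
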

\begin{proof}
  Toward a contradiction, suppose that there do exist such nodes \(u\) and \(u'\) for a pair \((x, y) \in \Inc(P)\).
  We have \(u \le w(x, y)\) and \(u' \le w(x, y)\) in \(T\), so the nodes \(u\) and \(u'\) are comparable in \(T\).
  Moreover, the nodes \(u\) and \(u'\) are distinct, since \(U(x) \cap D(y) = \emptyset\).
  Suppose first that \(u < u'\) in \(T\), and let \(v\) denote the parent of \(u'\).
  In particular \(u \le v < u' \le w(x, y) \le w(x)\) in \(T\).
  By Observation~\ref{obs:tree-decomposition-separations} with \(u_1 = w(x)\), \(u_2 = u\), \(v_1 = u'\), \(v_2 = v\) and \(H = G[U(x)]\), we have \(U(x) \cap (Y_{u'} \cap Y_v) \neq \emptyset\). But \(Y_{u'} \cap Y_v = \{s_{u'}, t_{u'}\}\), so \(U(x) \cap D(y) \neq \emptyset\), which is a contradiction.
  By a symmetric argument, if \(u' < u\) in \(T\), then the downset \(D(y)\) intersects the set \(\{s_{u}, t_{u}\}\), which is again a contradiction.
\end{proof}

\begin{claim}\label{clm:I2-gamma-1}
  In the dual poset \(P^d\) with \((T, \{(Y_u, s_u, t_u) : u \in V(T)\})\), each pair \((y, x) \in I^{-1}\) is a pair of type \(2\) such that \(\alpha(y, x) = 3 - \alpha\) and \(\delta(y, x) = 3-\delta\). Moreover, if \(\gamma = 2\), then \(\gamma(y, x) = 3-\gamma\) in \(P^d\) with \((T, \{(Y_u, s_u, t_u) : u \in V(T)\})\).
\end{claim}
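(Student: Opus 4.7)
The plan is to read off each equality directly from the definitions, tracking how they transform under the poset duality \(P \mapsto P^d\) while the tree-decomposition is kept fixed. The key identities are \(U_{P^d}(y) = D_P(y)\), \(D_{P^d}(x) = U_P(x)\), and \(w(y, x) = w(x, y)\), since the map \(w(\cdot)\) depends only on the ground set and the cover graph of \(P\), both of which are shared by \(P^d\).

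First I would verify that \((y, x)\) is of type \(2\) in \(P^d\): the conditions \(U_P(x) \cap Y_{w(x, y)} \neq \emptyset\) and \(D_P(y) \cap Y_{w(x, y)} \neq \emptyset\), which hold because \((x, y) \in I \subseteq I_2\), become, after dualizing, \(D_{P^d}(x) \cap Y_{w(y, x)} \neq \emptyset\) and \(U_{P^d}(y) \cap Y_{w(y, x)} \neq \emptyset\), which are exactly the type \(2\) conditions for \((y, x)\) in \(P^d\). Since \(\alpha\) depends only on the tree-decomposition and records which of \(w(x)\) and \(w(y)\) is smaller under \(\lein\), swapping the roles of \(x\) and \(y\) flips \(\alpha\), so \(\alpha(y, x) = 3 - \alpha\). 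For \(\delta\), I would split on \(|Y_{w(x, y)}|\): when this bag has size \(3\), both \(\delta(x, y)\) and \(\delta(y, x)\) agree with the corresponding values of \(\alpha\), so the conclusion follows from the \(\alpha\) case; when it has size \(2\), the condition ``\(s_{w(x, y)} \in U_P(x)\) and \(t_{w(x, y)} \in D_P(y)\)'' defining \(\delta(x, y) = 1\) translates under the duality to ``\(s_{w(y, x)} \in D_{P^d}(x)\) and \(t_{w(y, x)} \in U_{P^d}(y)\)'', which is precisely the condition defining \(\delta(y, x) = 2\), and symmetrically \(\delta(x, y) = 2\) forces \(\delta(y, x) = 1\).

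The one step that goes beyond unpacking definitions, and which I regard as the main (though small) obstacle, is the assertion about \(\gamma\) under the assumption \(\gamma = 2\). In this case, fix a node \(u \le w(x, y)\) in \(T\) with \(\{s_u, t_u\} \subseteq U_P(x)\). I want to show \(\gamma(y, x) = 1\) in \(P^d\), that is, no node \(u' \le w(y, x) = w(x, y)\) in \(T\) satisfies \(\{s_{u'}, t_{u'}\} \subseteq U_{P^d}(y) = D_P(y)\). But such a \(u'\), together with the node \(u\), would directly violate Claim~\ref{clm:no-u-prime}, so no such \(u'\) exists, giving \(\gamma(y, x) = 1 = 3 - \gamma\). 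The asymmetry of the statement, namely that the claim only addresses the case \(\gamma = 2\), is explained by the fact that \(\gamma(x, y) = 1\) merely records the absence of a witness node in \(P\), and absence does not dualize to presence; handling the \(\gamma = 1\) case will therefore require a different normalization step later in the argument.
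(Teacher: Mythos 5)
Your proposal is correct and follows essentially the same route as the paper: the type-$2$ property, $\alpha$, and $\delta$ are read off directly from the duality identities $U_{P^d}(y)=D_P(y)$, $D_{P^d}(x)=U_P(x)$, and the only substantive step, showing $\gamma(y,x)=1$ when $\gamma=2$, is handled exactly as in the paper by invoking Claim~\ref{clm:no-u-prime} to rule out a witness node for $D_P(y)=U_{P^d}(y)$. Your closing remark about why only the case $\gamma=2$ needs to be addressed also matches the role the claim plays in the subsequent normalization.
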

\begin{proof}
  Let \((y, x) \in I^{-1}\).
  We have \(D_{P^d}(x) \cap Y_{w(y, x)} = U_P(x) \cap Y_{w(x, y)}\) and \(U_{P^d}(y) \cap Y_{w(y, x)} = D_P(y) \cap Y_{w(x, y)}\).
  Since \((x, y)\) is a pair of type \(2\) in \(P\) with \((T, \{(Y_u, s_u, t_u) : u \in V(T)\})\), this implies \((y, x)\) is a pair of type \(2\) in \(P^d\) with \((T, \{(Y_u, s_u, t_u) : u \in V(T)\})\).

  The value of \(\alpha(y, x)\) does not depend on the poset \(P\), just on the s-t tree-decomposition, and \(\alpha(y, x) = 3-\alpha\).

  If \(|Y_{w(y, x)}| = 2\), then we have \(s_{w(x, y)} \in D_P(y) = U_{P^d}(y)\) and \(t_{w(x, y)} \in U_P(x) = D_{P^d}(x)\), so indeed \(\delta(y, x) = 3 - \delta\) in  \(P^d\) with \((T, \{(Y_u, s_u, t_u) : u \in V(T)\})\).
  Suppose \(|Y_{w(x, y)}| = 3\). 
  In such case \(\delta(x, y) = \alpha(x, y)\) in \(P\) with \((T, \{(Y_u, s_u, t_u) : u \in V(T)\})\), so \(\delta = \alpha\).
  Hence in \(P^d\) with \((T, \{(Y_u, s_u, t_u) : u \in V(T)\})\) we have \(\delta(y, x) = \alpha(y, x) = 3-\alpha = 3-\delta\).
  
  Suppose \(\gamma = 2\).
  We have \(\gamma(x, y) = 2\) in \(P\) with \((T, \{(Y_u, s_u, t_u) : u \in V(T)\})\), so by Claim~\ref{clm:no-u-prime}, there does not exist a node \(u'\) of \(T\) such that \(u' \le w(x, y)\) in \(T\) and \(\{s_{u'}, t_{u'}\} \subseteq D_P(y) = U_{P^d}(y)\).
  Hence \(\gamma(y, x) = 1 = 3-\gamma\) in \(P^d\) with \((T, \{(Y_u, s_u, t_u) : u \in V(T)\})\) if \(\gamma = 2\).
\end{proof}

The set \(I^{-1}\) is reversible in \(P^d\) if and only if the set \(I\) is reversible in \(P\).
Hence by Claim~\ref{clm:I2-gamma-1}, after possibly replacing \(P\) with \(P^d\) and \(I\) with \(I^{-1}\), we may assume \(\gamma = 1\).

\begin{claim}\label{clm:I2-delta-1}
  In \(P\) with the reversed s-t tree-decomposition \((T', \{(Y'_u, s'_u, t'_u) : u \in V(T')\})\), each pair from \(I\) is a pair of type \(2\) with signature \((3 - \alpha, \gamma, 3-\delta)\).
\end{claim}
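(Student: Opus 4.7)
The plan is to verify each of the four properties separately by unwinding the definitions and using that the reversed s-t tree-decomposition only swaps source/sink at each node and reverses the in-order \(\lein\), while leaving the bags \(Y_u\) and the ancestor order on the tree unchanged.

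First I would fix an arbitrary pair \((x,y) \in I\) and note that since \(Y'_u = Y_u\) for every node \(u\), and the ancestor partial order on \(V(T') = V(T)\) agrees with that on \(T\), the value \(w(x,y)\) is the same in both decompositions, and the sets \(U_P(x) \cap Y_{w(x,y)}\) and \(D_P(y) \cap Y_{w(x,y)}\) are unchanged. Since both are non-empty (as \((x,y)\) is of type \(2\) in the original decomposition), \((x,y)\) is still of type \(2\) in \((T', \{(Y'_u, s'_u, t'_u) : u \in V(T')\})\).

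Next I would handle \(\alpha\) and \(\gamma\). The value \(\alpha(x,y)\) depends only on the in-order \(\lein\), and as remarked before Lemma~\ref{lem:decompostion-st-subset}, \(\lein_{T'}\) is the reverse of \(\lein_T\); therefore \(\alpha(x,y)\) becomes \(3-\alpha\). For \(\gamma\), observe that the condition ``\(\{s_u, t_u\} \not\subseteq U(x)\) for every \(u \le w(x,y)\)'' uses only the ancestor relation and the unordered pair \(\{s_u, t_u\} = \{s'_u, t'_u\}\); both are preserved under reversal, so \(\gamma(x,y) = \gamma\) in the reversed decomposition.

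Finally I would check \(\delta\). If \(|Y_{w(x,y)}| = 2\), then \(s'_{w(x,y)} = t_{w(x,y)}\) and \(t'_{w(x,y)} = s_{w(x,y)}\), so the conditions defining \(\delta = 1\) and \(\delta = 2\) are exactly interchanged: \(\delta(x,y)\) becomes \(3 - \delta\). If \(|Y_{w(x,y)}| = 3\), then in both decompositions \(\delta(x,y) = \alpha(x,y)\), and from the previous paragraph \(\alpha(x,y)\) becomes \(3-\alpha = 3-\delta\). This gives the required signature \((3-\alpha, \gamma, 3-\delta)\). The only step that requires any care is the \(\delta\)-case when \(|Y_{w(x,y)}| = 2\), where one has to track which of \(s_{w(x,y)}, t_{w(x,y)}\) lies in \(U(x)\) versus \(D(y)\); everything else is a direct consequence of \(Y'_u = Y_u\), \(\{s'_u, t'_u\} = \{s_u, t_u\}\), and \(\lein_{T'}\) being the reverse of \(\lein_T\).
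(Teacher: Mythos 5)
Your proposal is correct and follows essentially the same route as the paper's proof: type 2 is preserved because \(Y'_{w(x,y)} = Y_{w(x,y)}\), \(\alpha\) flips because \(\lein_{T'}\) reverses \(\lein_T\), \(\gamma\) is unchanged because it only uses the ancestor order and the unordered pairs \(\{s_u,t_u\}\), and \(\delta\) flips either by the swap of source and sink at \(w(x,y)\) (bag of size \(2\)) or via \(\delta=\alpha\) (bag of size \(3\)). No gaps.
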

\begin{proof}
  Let \((x, y) \in I\).
  Since \(Y'_{w(x, y)} = Y_{w(x, y)}\), the pair \((x, y)\) is of type \(2\) in \(P\) with \((T', \{(Y'_u, s'_u, t'_u) : u \in V(T')\})\).

  The in-order \(\lein_{T}\) is the reverse of \(\lein_{T'}\), so with \((T', \{(Y'_u, s'_u, t'_u) : u \in V(T')\})\) we have \(\alpha(x, y) = 3-\alpha\).

  The trees \(T\) and \(T'\) differ only with the order of children and for every \(u \in V(T')\) we have \(\{s'_u, t'_u\} = \{s_u, t_u\}\), so the value of \(\gamma(x, y)\) remains the same with the reversed decomposition.

  If \(|Y_{w(x, y)}| = 2\), then \(s'_{w(x, y)} = t_{w(x, y)} \in U(x)\) and \(t'_{w(x, y)} = s_{w(x, y)} \in D(y)\), so indeed \(\delta(x, y) = 3 - \delta\).
  Suppose \(|Y_{w(x, y)}| = 3\).
  In \(P\) with \((T, \{(Y_u, s_u, t_u) : u \in V(T)\})\) we have \(\delta(x, y) = \alpha(x, y)\), so \(\delta = \alpha\).
  Hence in \(P\) with \((T', \{(Y'_u, s'_u, t'_u) : u \in V(T')\})\), we have \(\delta(x, y) = \alpha(x, y) = 3-\alpha = 3-\delta\).
\end{proof}

By Claim~\ref{clm:I2-delta-1}, after possibly replacing \((G, s, t)\) with \((G, t, s)\) and \((T, \{(Y_u, s_u, t_u) : u \in V(T)\})\) with \((T', \{(Y_u', s_u', t_u') : u \in V(T')\})\), we may assume that \(\gamma = 1\) and \(\delta = 1\).

Let \(T''\) denote a tree obtained from \(T\) by swapping the left and right children of every node \(u\) such that \(|Y_u| = 2\).
It is easy to observe that the pair \((T'', \{(Y_u, s_u, t_u) : u \in V(T'')\})\) is again an s-t tree-decomposition of \((G, s, t)\).

\begin{claim}\label{clm:I2-alpha-1}
  If \(\alpha = 2\) and \(\delta = 1\), then in \(P\) with the s-t tree-decomposition \((T'', \{(Y_u, s_u, t_u) : u \in V(T'')\})\), each pair from \(I\) is a pair of type \(2\) with signature \((3-\alpha, \gamma, \delta)\).
\end{claim}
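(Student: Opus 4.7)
The plan is to exploit the fact that $T''$ differs from $T$ only by swapping the children at some nodes, without changing the ancestor relation or the triples $(Y_u, s_u, t_u)$. Consequently, for every pair $(x,y)\in I$, the nodes $w(x)$, $w(y)$ and their meet $w(x,y) = w(x)\wedge w(y)$ are the same in both trees, and $Y_{w(x,y)}$, $s_{w(x,y)}$, $t_{w(x,y)}$, $U(x)$, and $D(y)$ are unchanged. Thus the type of $(x,y)$ remains $2$ (its defining conditions involve only $U(x)\cap Y_{w(x,y)}$ and $D(y)\cap Y_{w(x,y)}$), and $\gamma(x,y)$ is unchanged as well (its definition mentions only the ancestor relation and the sets $\{s_u,t_u\}$, together with $U(x)$).

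Before verifying $\delta$ and $\alpha$, I would begin with the key observation that the hypothesis $\alpha=2$ and $\delta=1$ forces $|Y_{w(x,y)}|=2$ for every $(x,y)\in I$: indeed, if $|Y_{w(x,y)}|=3$, then by definition $\delta(x,y)=\alpha(x,y)$, giving $\delta=\alpha=2$, contradicting $\delta=1$. With $|Y_{w(x,y)}|=2$ established, $\delta=1$ says that $s_{w(x,y)}\in U(x)$ and $t_{w(x,y)}\in D(y)$, which is preserved verbatim under the passage to $T''$, giving $\delta(x,y)=1=\delta$ in $T''$.

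The main (and only real) step is verifying that $\alpha(x,y)$ flips from $2$ to $1$. Set $z = w(x,y)$. Since $|Y_z|=2$, the children of $z$ are swapped when passing from $T$ to $T''$, so $\ell_{T''}(z)=r_T(z)$ and $r_{T''}(z)=\ell_T(z)$, while at every other node on the paths from $z$ down to $w(x)$ and $w(y)$ nothing relevant changes (in particular, the subtree rooted at $\ell_T(z)$ is the same as the subtree rooted at $r_{T''}(z)$, and similarly on the other side). Unfolding the definition, $w(y)\ltin_T w(x)$ means $r_T(z)\not\le w(y)$ and $\ell_T(z)\not\le w(x)$; after the swap these become $\ell_{T''}(z)\not\le w(y)$ and $r_{T''}(z)\not\le w(x)$, which is exactly $w(x)\ltin_{T''} w(y)$, i.e.\ $\alpha(x,y)=1=3-\alpha$ in $T''$.

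I do not foresee a serious obstacle here; everything follows once one notes that $T''$ only rearranges the in-order at $|Y_u|=2$ nodes while fixing ancestors and bags. The single subtle point to handle carefully is ruling out $|Y_{w(x,y)}|=3$, since the definition of $\delta$ couples it to $\alpha$ exactly in that case and the claim would fail there; the restriction to signatures with $\delta=1\neq 2=\alpha$ is precisely what eliminates this case.
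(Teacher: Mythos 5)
Your proof is correct and follows essentially the same route as the paper: both arguments first use $\alpha=2\neq 1=\delta$ to force $|Y_{w(x,y)}|=2$ (so the children of $w(x,y)$ are swapped in $T''$), then observe that the swap flips $\alpha$ while type, $\gamma$, and $\delta$ depend only on data unchanged by the swap. Your direct unfolding of the definition of $\ltin$ at the meet $w(x,y)$ is a perfectly valid (and slightly more self-contained) way of phrasing the step the paper does via $\ell_{T''}(w(x,y))\le w(x)$ and $r_{T''}(w(x,y))\le w(y)$.
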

\begin{proof}
  Suppose that \(\alpha = 2\) and \(\delta = 1\), and let \((x, y) \in I\).
  Since the set \(Y_{w(x, y)}\) remains the same in the new decomposition, the pair \((x, y)\) is of type \(2\) in \(P\) with \((T'', \{(Y_u, s_u, t_u) : u \in V(T'')\})\).

  Since \(\alpha(x, y) \neq \delta(x, y)\), \(|Y_{w(x, y)}| = 2\).
  Hence \(w(x) \neq w(x, y)\) and \(w(y) \neq w(x, y)\).
  Therefore \(\ell_{T''}(w(x, y)) = r_T(w(x, y)) \le w(x)\) and \(r_{T''}(w(x, y)) = \ell_T(w(x, y)) \le w(y)\) in \(T'' \).
  Hence, in \(P\) with \((T'', \{(Y_u, s_u, t_u) : u \in V(T'')\})\), we have \(\alpha(x, y) = 1 = 3-\alpha\).

  Since the values \(\gamma(x, y)\) and \(\delta(x, y)\) do not depend on the order of the children of the nodes with bags of size \(2\), we have \(\gamma(x, y) = \gamma\) and \(\delta(x, y) = \delta\) in \(P\) with \((T'', \{(Y_u, s_u, t_u) : u \in V(T'')\})\).
\end{proof}

By Claim~\ref{clm:I2-alpha-1}, after possibly replacing the tree \(T\) with \(T''\), we may assume \((\alpha,\gamma,\delta) = (1,1,1)\).

We proceed to the proof that \(I\) is reversible.
Suppose to the contrary that \(I\) is not reversible.
By Observation~\ref{obs:reversible-sets-characterization}, there exists a strict alternating cycle \(\{(x_i, y_i) : i \in \{1, \dots, n\}\}\) in \(P\) such that for every \(i \in \{1, \dots, n\}\), the pair \((x_i, y_i)\) is of type \(2\) and \(\sigma_2(x_i, y_i)  = (1, 1, 1)\).

\begin{claim}\label{clm:w-nae}
  Not all nodes \(w(x_i, y_i)\) for \(i \in \{1, \dots, n\}\) are the same.
\end{claim}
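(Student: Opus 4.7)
The plan is to assume toward contradiction that $w(x_i, y_i)$ equals a common node $w \in V(T)$ for every $i \in \{1, \dots, n\}$, and to produce an element $z \in Y_w$ with $x_i \leq z \leq y_i$ in $P$ for every $i$, which contradicts the incomparability of $x_i$ and $y_i$. I choose $z = s_w$ when $|Y_w| = 2$ and $z = m_w$ when $|Y_w| = 3$. As a preliminary observation, Lemma~\ref{lem:wx} combined with $\alpha(x_i, y_i) = 1$ forces $w(x_i)$ to lie either at $w$ (in which case $x_i = m_w$) or strictly in the left subtree of $w$, and $w(y_i)$ to lie either at $w$ (in which case $y_i = m_w$) or strictly in the right subtree; in particular $w$ is internal with children $\ell(w)$ and $r(w)$.

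For the lower bound $x_i \leq z$, the case $|Y_w| = 2$ is immediate from $\delta(x_i, y_i) = 1$, which provides $s_w \in U(x_i)$. When $|Y_w| = 3$ and $x_i \neq m_w$, I would take the covering chain from $x_i$ to $y_{i+1}$, viewed as a connected subgraph of $G$ whose vertex set lies in $U(x_i) \cap D(y_{i+1})$, and feed it into Observation~\ref{obs:tree-decomposition-separations} across the edge $\ell(w)w$: this yields a chain vertex in $Y_{\ell(w)} \cap Y_w = \{s_w, m_w\}$. If $m_w$ is on the chain we are done; otherwise $s_w \in U(x_i)$. If additionally $w(y_{i+1}) \neq w$, the same observation along the edge $wr(w)$ produces a chain vertex in $\{m_w, t_w\}$, which must be $t_w$, and then $\{s_w, t_w\} \subseteq U(x_i)$ contradicts $\gamma(x_i, y_i) = 1$ applied at $u = w$. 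The only remaining possibility is $w(y_{i+1}) = w$, i.e.\ $y_{i+1} = m_w$, in which case $m_w$ is already an endpoint of the chain and hence in $U(x_i)$.

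The upper bound $z \leq y_j$ follows by the entirely symmetric argument on the covering chain from $x_{j-1}$ to $y_j$: in the $|Y_w| = 2$ case, $\delta(x_{j-1}, y_{j-1}) = 1$ gives $s_w \in U(x_{j-1})$ while $\gamma(x_{j-1}, y_{j-1}) = 1$ at $u = w$ gives $t_w \notin U(x_{j-1})$, so the chain enters $Y_w$ at $s_w$, yielding $s_w \leq y_j$; in the $|Y_w| = 3$ case, the same dichotomy as for the lower bound (using $\gamma(x_{j-1}, y_{j-1}) = 1$) forces $m_w$ onto the chain, yielding $m_w \leq y_j$. Taking $j = i$ gives $x_i \leq z \leq y_i$ in $P$, contradicting the incomparability of $x_i$ and $y_i$. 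The main subtlety I anticipate lies in the degenerate cases $w(x_i) = w$ and $w(y_{i+1}) = w$, equivalently $x_i = m_w$ and $y_{i+1} = m_w$, because there the covering chain is too short to cross both separators around $w$; but in each such case the relevant endpoint is already $m_w$, so the conclusion comes for free.
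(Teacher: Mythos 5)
Your proposal is correct and rests on the same machinery as the paper's proof: both split on \(|Y_w| \in \{2,3\}\), push covering chains between consecutive pairs through the separators \(Y_{\ell(w)} \cap Y_w\) and \(Y_w \cap Y_{r(w)}\) via Observation~\ref{obs:tree-decomposition-separations}, and use \(\gamma = 1\) to forbid a chain from picking up both \(s_w\) and \(t_w\). The only difference is the endgame: the paper takes two disjoint consecutive chains so that one must avoid \(m_w\) and contradicts \(\gamma(x_i, y_i) = 1\) directly, whereas you show every such chain must contain a fixed pivot \(z \in \{s_w, m_w\}\) and contradict incomparability via \(x_i \le z \le y_i\) --- two equally valid readings of the same dichotomy.
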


\begin{proof}
  Suppose to the contrary that all nodes \(w(x_i, y_i)\) for \(i \in \{1, \dots, n\}\) are the same node~\(w\).
  We break the argument into two cases.
  
  \textit{Case 1:}
  Suppose that \(|Y_w| = 2\).
  Let \(Q\) be a covering chain from \(x_1\) to \(y_2\).
  Since \(\alpha(x_2, y_2) = 1\), we have \(\ell(w) \not \le w(y_2)\) in \(T\), so by Claim~\ref{clm:alpha-1} item~(\ref{itm:alpha-1-x}) with \((x, y) = (x_1, y_1)\), \(u = w(y_2)\) and \(H = G[Q]\), we have \(Q \cap \{s_{\ell(w)}, t_{\ell(w)}\} \neq \emptyset\).
  We have \(|Y_w| = 2\), so \(s_w \in Q\) or \(t_w \in Q\).
  If \(s_w \in Q\), then \(Q \cap U(x_2) \neq \emptyset\), since \(\delta(x_2, y_2) = 1\).
  This contradicts \((x_2, y_2) \in \Inc(P)\).
  Similarly, if \(t_w \in Q\), then \(Q \cap D(y_1) \neq \emptyset\), since \(\delta(x_1, y_1) = 1\).
  This contradicts \((x_1, y_1) \in \Inc(P)\).

  \textit{Case 2:}
  Suppose that \(|Y_w| = 3\).
  For \(i \in \{1, 2\}\), let \(Q_i\) be a covering chain from \(x_i\) to \(y_{i+1}\).
  Since \((x_2, y_2) \in \Inc(P)\), the chains \(Q_1\) and \(Q_2\) are disjoint.
  Hence at most one of the chains contains \(m_w\).
  Let \(i \in \{1, 2\}\) be such that \(m_w \not \in Q_i\).
  Since \(\alpha(x_{i+1}, y_{i+1}) = 1\), we have \(\ell(w) \not \le w(y_{i+1})\) in \(T\), so by Claim~\ref{clm:alpha-1} item~(\ref{itm:alpha-1-x}) with \((x, y) = (x_i, y_i)\), \(u = w(y_{i+1})\) and \(H = G[Q_i]\), we have \(Q_i \cap \{s_{\ell(w)}, t_{\ell(w)}\} \neq \emptyset\).
  But \(t_{\ell(w)} = m_w \not \in Q_i\).
  This implies \(s_{\ell(w)} \in Q_i\), so \(s_w \in U(x_i)\).
  Similarly, since \(\alpha(x_i, y_i) = 1\), we have \(r(w) \not \le w(x_i)\) in \(T\), so by Claim~\ref{clm:alpha-1} item~(\ref{itm:alpha-1-y}) with \((x, y) = (x_{i+1}, y_{i+1})\), \(u = w(x_i)\) and \(H = G[Q_i]\), we have \(Q_i \cap \{s_{r(w)}, t_{r(w)}\} \neq \emptyset\).
  We have \(s_{r(w)} = m_w \not \in Q_i\), so \(t_{r(w)} \in Q_i\), and thus \(t_w \in U(x_i)\).
  Hence \(\{s_w, t_w\} \subseteq U(x_i)\), which contradicts \(\gamma(x_i, y_i) = 1\).
\end{proof}

By Claim~\ref{clm:w-nae}, we may (and do) assume without loss of generality
\[
  w(x_1, y_1) \ltin w(x_2, y_2).
\]

\begin{claim}\label{clm:existsw}
  Let \((x, y)\) and \((x', y')\) be two pairs of type \(2\) such that \(\alpha(x, y) = \alpha(x', y') = 1\), \(\delta(x, y) = \delta(x', y') = 1\) and \(w(x, y) \ltin w(x', y')\), and let \(Q\) be a covering chain from \(x\) to \(y'\). There exists a node \(w \in \{w(x, y), \ell(w(x, y)), r(w(x, y))\}\) such that \(s_w \in Q\) and \(t_w \not \in Q\).
\end{claim}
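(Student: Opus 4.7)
\emph{Proof plan.} Write $w^\ast = w(x,y)$. The plan is to apply Claim~\ref{clm:alpha-1}(\ref{itm:alpha-1-x}) to pin down which elements of $Y_{w^\ast}$ lie in $Q$, and then to pick a suitable $w \in \{w^\ast, \ell(w^\ast), r(w^\ast)\}$ by a short case analysis on $|Y_{w^\ast}|$. First I will verify that $\ell(w^\ast) \not\le w(y')$ in $T$: if $w(y')$ were a descendant of $\ell(w^\ast)$ it would lie in the left subtree of $w^\ast$, giving $w(y') \prec w^\ast$; but $\alpha(x',y') = 1$ together with $w(x',y') = w(x') \wedge w(y')$ yields $w(x',y') \preceq w(y')$, so the assumption $w^\ast \prec w(x',y')$ forces $w^\ast \prec w(y')$, a contradiction. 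Then Claim~\ref{clm:alpha-1}(\ref{itm:alpha-1-x}) applied to $(x,y)$ with $u = w(y')$ and $H = G[Q]$ gives $Q \cap \{s_{\ell(w^\ast)}, t_{\ell(w^\ast)}\} \neq \emptyset$.

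If $|Y_{w^\ast}| = 2$, that set equals $\{s_{w^\ast}, t_{w^\ast}\}$, and $\delta(x,y) = 1$ gives $s_{w^\ast} \in U(x)$, $t_{w^\ast} \in D(y)$. Every element of $Q$ is $\ge x$, so $t_{w^\ast} \in Q$ would yield $x \le t_{w^\ast} \le y$, contradicting $(x,y) \in \Inc(P)$. Hence $t_{w^\ast} \notin Q$ and $s_{w^\ast} \in Q$, so $w = w^\ast$ works. If $|Y_{w^\ast}| = 3$, the set is $\{s_{w^\ast}, m_{w^\ast}\}$. When $s_{w^\ast} \in Q$, either $t_{w^\ast} \notin Q$ and $w = w^\ast$ works, or $\{s_{w^\ast}, t_{w^\ast}\} \subseteq Q \subseteq U(x)$, violating $\gamma(x,y) = 1$ at $u = w^\ast$. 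When $s_{w^\ast} \notin Q$, we must have $m_{w^\ast} \in Q$, and provided $t_{w^\ast} \notin Q$ the choice $w = r(w^\ast)$ works since $s_{r(w^\ast)} = m_{w^\ast}$ and $t_{r(w^\ast)} = t_{w^\ast}$.

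The main obstacle is to exclude the remaining configuration, $|Y_{w^\ast}| = 3$ with $s_{w^\ast} \notin Q$ and $m_{w^\ast}, t_{w^\ast} \in Q$. All elements of $Q$ are $\ge x$, so $m_{w^\ast} \le y$ or $t_{w^\ast} \le y$ would force $x \le y$; hence $m_{w^\ast}, t_{w^\ast} \notin D(y)$. Since $(x,y) \in I_2$ yields $D(y) \cap Y_{w^\ast} \neq \emptyset$, this forces $s_{w^\ast} \in D(y)$, and I pick a covering chain $R$ from $s_{w^\ast}$ to $y$. Because $m_{w^\ast} \notin D(y)$ we have $y \neq m_{w^\ast}$, so $w(y) \neq w^\ast$, and $\alpha(x,y) = 1$ then places $w(y)$ strictly in the right subtree of $w^\ast$. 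Observation~\ref{obs:tree-decomposition-separations} applied to $R$ with the edge $w^\ast r(w^\ast)$ on the path from $w^\ast$ to $w(y)$ yields $R \cap (Y_{w^\ast} \cap Y_{r(w^\ast)}) \neq \emptyset$; but $Y_{w^\ast} \cap Y_{r(w^\ast)} = \{m_{w^\ast}, t_{w^\ast}\}$ and every element of $R$ is $\le y$, so the intersecting element lies in $D(y)$, contradicting $m_{w^\ast}, t_{w^\ast} \notin D(y)$.
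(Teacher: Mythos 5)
Your overall strategy is the same as the paper's: show \(\ell(w(x,y)) \not\le w(y')\), apply Claim~\ref{clm:alpha-1}(\ref{itm:alpha-1-x}) with \(u = w(y')\) and \(H = G[Q]\) to get \(Q \cap \{s_{\ell(w(x,y))}, t_{\ell(w(x,y))}\} \neq \emptyset\), and then do a case analysis on \(|Y_{w(x,y)}|\). Those parts are correct (your argument for \(\ell(w(x,y)) \not\le w(y')\) via the in-order is a clean variant of the paper's), and your final paragraph disposing of the configuration \(s_{w(x,y)} \notin Q\), \(m_{w(x,y)}, t_{w(x,y)} \in Q\) is also sound.

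There is, however, a genuine gap in the subcase \(|Y_{w(x,y)}| = 3\) with \(\{s_{w(x,y)}, t_{w(x,y)}\} \subseteq Q\): you dismiss it as ``violating \(\gamma(x, y) = 1\),'' but \(\gamma(x,y) = 1\) is \emph{not} among the hypotheses of the claim, which assumes only that the pairs are of type \(2\) with \(\alpha = \delta = 1\). This matters because the paper invokes the claim a second time, in the dual poset \(P^d\) with the reversed decomposition and the pair \((y_2, x_2)\); Claims~\ref{clm:I2-gamma-1} and~\ref{clm:I2-delta-1} guarantee \(\alpha = \delta = 1\) there but give no control over \(\gamma\), so the subcase you excluded can actually occur in that application. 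The paper avoids this by also applying Claim~\ref{clm:alpha-1}(\ref{itm:alpha-1-y}) with \(u = w(x,y)\) and \(H = G[D(y)]\) to produce \(z_2 \in D(y) \cap \{m_{w(x,y)}, t_{w(x,y)}\}\) with \(z_2 \notin Q\); in your problematic subcase \(t_{w(x,y)} \in Q \subseteq U(x)\) forces \(z_2 = m_{w(x,y)}\), hence \(m_{w(x,y)} \notin Q\) and \(w = \ell(w(x,y))\) witnesses the claim. (Equivalently: since \((x,y)\) is of type \(2\), \(D(y) \cap Y_{w(x,y)} \neq \emptyset\), and neither \(s_{w(x,y)}\) nor \(t_{w(x,y)}\) can lie in \(D(y)\) as both lie in \(Q \subseteq U(x)\), so \(m_{w(x,y)} \in D(y) \setminus Q\).) The patch is short, but as written your proof establishes the claim only under an extra hypothesis that the paper deliberately omits.
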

\begin{proof}
  We will first show that \(\ell(w(x, y)) \not \le w(y')\) in \(T\).
  Suppose to the contrary that \(\ell(w(x, y)) \le w(y')\) in \(T\).
  We have \(w(x, y) \le \ell(w(x, y)) \le w(y')\) and \(w(x', y') \le w(y')\) in \(T\), so the nodes \(w(x, y)\) and \(w(x', y')\) are comparable in \(T\).
  Since \(w(x, y) \ltin w(x', y')\), either \(r(w(x, y)) \le w(x', y')\) in \(T\) or \(\ell(w(x', y')) \le w(x, y)\) in \(T\).
  In the former case we have \(r(w(x, y)) \le w(x', y') \le w(y')\) in \(T\), which contradicts \(\ell(w(x, y)) \le w(y')\) in \(T\).
  In the latter case we have \(\ell(w(x', y')) \le w(x, y) \le w(y')\) in \(T\), which contradicts \(\alpha(x', y') = 1\).
  Hence indeed \(\ell(w(x, y)) \not \le w(y')\) in \(T\).

  By Claim~\ref{clm:alpha-1} item (\ref{itm:alpha-1-x}) with \((x, y) = (x, y)\), \(u = w(y')\) and \(H = G[Q]\), we have \(Q \cap \{s_{\ell(w(x, y))}, t_{\ell(w(x, y))}\} \neq \emptyset\).
  Let \(z_1 \in Q \cap \{s_{\ell(w(x, y))}, t_{\ell(w(x, y))}\}\).
  
  By Claim~\ref{clm:alpha-1} item (\ref{itm:alpha-1-y}) with \((x, y) = (x, y)\), \(u = w(x, y)\) and \(H = G[D(y)]\), we have \(D(y) \cap \{s_{r(w(x, y))}, t_{r(w(x, y))}\} \neq \emptyset\).
  Let \(z_2 \in D(y) \cap \{s_{r(w(x, y))}, t_{r(w(x, y))}\}\).
  We have \(z_2 \not \in Q\) since otherwise \(x \le z_2 \le y\), contradicting \((x, y) \in \Inc(P)\).

  Suppose that \(|Y_{w(x, y)}| = 2\).
  The vertices \(z_1\) and \(z_2\) are distinct elements of \(\{s_{w(x, y)}, t_{w(x, y)}\}\), so \(z_1 = s_{w(x, y)}\) and \(z_2 = t_{w(x, y)}\) since \(\delta(x, y) = 1\).
  Hence the node \(w = w(x, y)\) satisfies the claim.

  Now suppose that \(|Y_{w(x, y)}| = 3\).
  We have \(z_1 \in \{s_{w(x, y)}, m_{w(x, y)}\}\) and \(z_2 \in \{m_{w(x, y)}, t_{w(x, y)}\}\).

  If \(z_1 = s_{w(x, y)}\) and \(z_2 = m_{w(x, y)}\), then \(w = \ell(w(x, y))\) satisfies the claim.

  If \(z_1 = s_{w(x, y)}\) and \(z_2 = t_{w(x, y)}\), then \(w = w(x, y)\) satisfies the claim.

  Finally, if \(z_1 = m_{w(x, y)}\) and \(z_2 = t_{w(x, y)}\), then \(w = r(w(x, y))\) satisfies the claim.
\end{proof}

Let \(Q\) be a covering chain from \(x_1\) to \(y_2\).
By Claim~\ref{clm:existsw} with \((x, y) = (x_1, y_1)\), \((x', y') = (x_2, y_2)\) and \(Q = Q\), there exists a node \(w_1\in \{w(x_1, y_1), \ell(w(x_1, y_1)), r(w(x_1, y_1))\}\) such that
\[
  s_{w_1} \in Q \quad \text{and} \quad t_{w_1} \not \in Q.
\]

By Claim~\ref{clm:I2-gamma-1} and Claim~\ref{clm:I2-delta-1}, in the dual poset \(P^d\) with the reversed s-t tree-decomposition \((T', \{(Y'_u, s'_u, t'_u) : u \in V(T')\})\), we have \(\alpha(y_2, x_2) = \alpha(y_1, x_1) = (3 - (3 - \alpha)) = 1\) and \(\delta(y_2, x_2) = \delta(y_1, x_1) = (3 - (3 - \delta)) = 1\).
Moreover \(w(y_2, x_2) \ltin_{T'} w(y_1, x_1)\).
By applying Claim~\ref{clm:existsw} to the dual poset and the reversed decomposition with \((x, y) = (y_2, x_2)\), \((x', y') = (y_1, x_1)\) and \(Q = Q\), we obtain a node \(w_2\in \{w(x_2, y_2), \ell(w(x_2, y_2)), r(w(x_2, y_2))\}\) such that 
\[
  t_{w_2} \in Q \quad \text{and} \quad s_{w_2} \not \in Q
\]
since \(t_{w_2} = s'_{w_2}\) and \(s_{w_2} = t'_{w_2}\).

\begin{claim}\label{clm:tuniq}
  \(t_u \not \in Q\) for \(u \le w_1\) in \(T\).
\end{claim}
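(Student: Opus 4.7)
The plan is to argue by contradiction, reducing a hypothetical violation to a contradiction with $\gamma(x_1, y_1) = 1$. The main tool will be Lemma~\ref{lem:decompostion-st-subset}, applied to the connected subgraph $G[Q]$ induced by the covering chain $Q$ in the cover graph.

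First I would dispose of the trivial case $u = w_1$, which immediately contradicts the already established $t_{w_1} \notin Q$. So I may assume $u$ is a proper ancestor of $w_1$ in $T$. Then $s_{w_1} \in Q$ and $t_u \in Q$ provide two elements of $Q$ lying in bags whose indices are comparable in $T$, so applying Lemma~\ref{lem:decompostion-st-subset} with $u_1 = w_1$, $u_2 = u$ and $H = G[Q]$ produces a node $v$ on the path from $u$ to $w_1$ in $T$ satisfying $\{s_v, t_v\} \subseteq Q$.

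The next step is to locate $v$ below $w(x_1, y_1)$ in $T$. The condition $t_{w_1} \notin Q$ rules out $v = w_1$, hence $v < w_1$ in $T$. Because $w_1 \in \{w(x_1, y_1), \ell(w(x_1, y_1)), r(w(x_1, y_1))\}$, every proper ancestor of $w_1$ is at most $w(x_1, y_1)$ in $T$, so $v \le w(x_1, y_1)$. Since $Q$ is a covering chain starting at $x_1$, we have $Q \subseteq U(x_1)$, so $\{s_v, t_v\} \subseteq U(x_1)$, which directly contradicts $\gamma(x_1, y_1) = 1$.

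The only real subtlety is excluding the degenerate outcome $v = w_1$ of Lemma~\ref{lem:decompostion-st-subset}: without the hypothesis $t_{w_1} \notin Q$, the lemma could return $v = w_1$, and since $w_1$ need not lie at or above $w(x_1, y_1)$, the contradiction with $\gamma(x_1, y_1) = 1$ would not go through. This is exactly why the preceding application of Claim~\ref{clm:existsw} was used to pick $w_1$ with the specific property $t_{w_1} \notin Q$, and the present claim is the payoff of that choice.
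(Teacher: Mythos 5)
Your proof is correct and follows essentially the same route as the paper: apply Lemma~\ref{lem:decompostion-st-subset} to $G[Q]$ with $u_1 = w_1$ and $u_2 = u$, use $t_{w_1} \not\in Q$ to force $v < w_1$ and hence $v \le w(x_1, y_1)$, and derive a contradiction with $\gamma(x_1, y_1) = 1$. The separate handling of the case $u = w_1$ is harmless but unnecessary, since the same argument covers it.
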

\begin{proof}
  Suppose to the contrary that there exists a node \(u\) such that \(u \le w_1\) in \(T\) and \(t_u \in Q\).
  By Lemma~\ref{lem:decompostion-st-subset} with \(u_1=w_1\), \(u_2=u\) and \(H=G[Q]\), there exists a node \(v\) such that \(u \le v \le w_1\) and \(\{s_v, t_v\} \subseteq Q \subseteq U(x_1)\).
  Since \(t_{w_1} \not \in Q\), we have \(v < w_1\) in \(T\), and thus \(v \le w(x_1, y_1)\) in \(T\), contradicting \(\gamma(x_1, y_1) = 1\).
\end{proof}

\begin{claim}
  There exists a node \(v\) such that \(v \not \le w(x_1, y_1)\) in \(T\), \(v \le w(x_2, y_2)\) in \(T\), and \(\{s_v, t_v\} \subseteq Q\).
\end{claim}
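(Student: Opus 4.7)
The plan is to apply Lemma~\ref{lem:decompostion-st-subset} to the chain $Q$ with carefully chosen endpoints.  Set $a := w(x_1, y_1) \wedge w(x_2, y_2)$.  A short case analysis using the assumption $w(x_1, y_1) \ltin w(x_2, y_2)$ and the definition of the in-order shows that $r(a) \le w(x_2, y_2)$ in $T$, and that $w(x_1, y_1)$ either equals $a$ or lies in the subtree rooted at $\ell(a)$.  In particular, every node $v$ with $v \ge r(a)$ sits in the right subtree of $a$ and so is neither equal to, nor an ancestor of, $w(x_1, y_1)$.

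The first substantial step is to show $s_{r(a)} \in Q$.  The conditions $\alpha(x_1, y_1) = \alpha(x_2, y_2) = 1$ together with the same case analysis place $w(x_1)$ avoiding the right subtree of $a$ and locate $w(y_2)$ in the subtree rooted at $r(a)$, so the $T$-path from $w(x_1)$ to $w(y_2)$ passes through the edge $\{a, r(a)\}$.  Since $G[Q]$ is a connected subgraph of $G$ containing $x_1 \in Y_{w(x_1)}$ and $y_2 \in Y_{w(y_2)}$, Observation~\ref{obs:tree-decomposition-separations} yields $Q \cap \{s_{r(a)}, t_{r(a)}\} \neq \emptyset$; and since $t_{r(a)} = t_a$ while $a \le w(x_1, y_1) \le w_1$ in $T$, Claim~\ref{clm:tuniq} forces $t_a \notin Q$, so $s_{r(a)} \in Q$.

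Next, I apply Lemma~\ref{lem:decompostion-st-subset} with $u_1 = r(a)$, $u_2 = w_2$, and $H = G[Q]$.  These nodes are comparable because $r(a) \le w(x_2, y_2) \le w_2$, and both $s_{r(a)}$ and $t_{w_2}$ belong to $Q$, so the lemma produces a node $v$ with $r(a) \le v \le w_2$ in $T$ and $\{s_v, t_v\} \subseteq Q$.  As $s_{w_2} \notin Q$ by the defining property of $w_2$ while $s_v \in Q$, we have $v \neq w_2$, hence $v < w_2$; combined with $w_2 \in \{w(x_2, y_2), \ell(w(x_2, y_2)), r(w(x_2, y_2))\}$, this forces $v \le w(x_2, y_2)$.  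Finally, $v \ge r(a)$ places $v$ in the right subtree of $a$, so $v \not\le w(x_1, y_1)$ by the observation of the first paragraph.

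The main obstacle is establishing the preliminary structural fact $r(a) \le w(x_2, y_2)$: this fails only when $w(x_2, y_2) = a$ is itself a strict ancestor of $w(x_1, y_1)$, a configuration in which no $v$ can simultaneously satisfy $v \le w(x_2, y_2)$ and $v \not\le w(x_1, y_1)$, and so one that must be excluded --- most naturally via the symmetry between $(x_1, y_1)$ and $(x_2, y_2)$ together with the constraints on $w_2$ arising from Claim~\ref{clm:existsw} applied in the dual poset $P^d$ and the reversed decomposition.
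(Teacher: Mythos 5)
Your main construction is sound and close in spirit to the paper's: the paper organizes the argument as a case analysis on whether \(w_1 \le w_2\), \(w_2 \le w_1\), or the two are incomparable in \(T\) (using \(w_1 \wedge w_2\) where you use \(a = w(x_1,y_1) \wedge w(x_2,y_2)\)), but the mechanism is the same --- get \(s\) of some node below \(w(x_1,y_1)\)'s side into \(Q\) via Observation~\ref{obs:tree-decomposition-separations} and Claim~\ref{clm:tuniq}, then run Lemma~\ref{lem:decompostion-st-subset} up to \(w_2\) and squeeze \(v\) strictly between using \(t_{w_1} \notin Q\) and \(s_{w_2} \notin Q\). The steps you do write out (that \(w(x_1)\) avoids the right subtree of \(a\), that \(w(y_2) \ge r(a)\), that \(t_{r(a)} = t_a \notin Q\) by Claim~\ref{clm:tuniq}, and the final application of Lemma~\ref{lem:decompostion-st-subset}) are all correct.

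The genuine gap is your last paragraph. The configuration \(a = w(x_2,y_2) < w(x_1,y_1)\) (with \(\ell(a) \le w(x_1,y_1)\), forced by \(w(x_1,y_1) \ltin w(x_2,y_2)\)) is precisely the one in which the claim's conclusion is \emph{impossible} --- any \(v \le w(x_2,y_2)\) automatically satisfies \(v \le w(x_1,y_1)\) --- so ruling it out is not a side condition but an indispensable part of the proof, and you do not actually do it; you only assert it ``must be excluded'' and gesture at symmetry and the dual poset. Moreover, the suggested repair is not obviously workable: the natural dual of Claim~\ref{clm:tuniq} (``\(s_u \notin Q\) for \(u \le w_2\)'') would need \(\gamma(y_2,x_2) = 1\) in \(P^d\), which does not follow from \(\gamma = 1\) (Claim~\ref{clm:I2-gamma-1} only controls the dual value of \(\gamma\) when \(\gamma = 2\)). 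The paper instead excludes this configuration by a direct case analysis on \(w_1\) versus \(w_2\): if \(w_2 \le w_1\) (which happens here when \(w_2 \in \{w(x_2,y_2), \ell(w(x_2,y_2))\}\), since then \(w_2 \le \ell(w(x_2,y_2)) \le w(x_1,y_1) \le w_1\)), Claim~\ref{clm:tuniq} contradicts \(t_{w_2} \in Q\); and if \(w_2 = r(w(x_2,y_2))\) is incomparable with \(w_1\), Observation~\ref{obs:tree-decomposition-separations} across the edge at \(w_1 \wedge w_2\) plus Claim~\ref{clm:tuniq} puts \(s_{w_1\wedge w_2}\) in \(Q\), and Lemma~\ref{lem:decompostion-st-subset} then yields a node violating either \(s_{w_2}\notin Q\) or Claim~\ref{clm:tuniq}. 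Without some such argument your proof is incomplete.
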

\begin{proof}
  Suppose first that \(w_1 \le w_2\) in \(T\). 
  By Lemma~\ref{lem:decompostion-st-subset} with \(u_1 = w_1\), \(u_2 = w_2\) and \(H = G[Q]\), there is a node \(v\) such that \(w_1 \le v \le w_2\) in \(T\) and \(\{s_v, t_v \} \subseteq Q\).
  Since \(t_{w_1} \not \in Q\), we have \(w_1 < v\) in \(T\).
  The node \(w(x_1, y_1)\) is \(w_1\) or its parent, so \(w(x_1, y_1) < v\) in \(T\).
  Since \(s_{w_1} \not \in Q\), we have \(v < w_2\) in \(T\).
  The node \(w(x_2, y_2)\) is \(w_2\) or its parent, so \(v \le w(x_2, y_2)\) in \(T\), and thus the node \(v\) satisfies the claim.

  If \(w_2 \le w_1\) in \(T\), then by Claim~\ref{clm:tuniq}, we have \(t_{w_2} \not \in Q\), which is impossible.

  Finally suppose that \(w_1\) and \(w_2\) are incomparable in \(T\).
  We claim that \(w_1 \ltin w_2\).
  Suppose to the contrary that \(w_2 \ltin w_1\).
  Since \(w_1\) and \(w_2\) are incomparable in \(T\), we have \(\ell(w_1 \wedge w_2) \le w_2\) and \(r(w_1 \wedge w_2) \le w_1\) in \(T\).
  Since \(w(x_i, y_i)\) is \(w_i\) or its parent for \(i \in \{1, 2\}\), we have \(w(x_2, y_2) \lein w_1 \wedge w_2 \lein w(x_1, y_1)\) in \(T\), which is a contradiction.
  Hence indeed \(w_1 \ltin w_2\), and thus \(\ell(w_1 \wedge w_2) \le w_1\) and \(\ell(w_1 \wedge w_2) \not \le w_2\) in \(T\).
  By Observation~\ref{obs:tree-decomposition-separations} with \(u_1= w_1\), \(u_2 = w_2\), \(v_1 = \ell(w_1 \wedge w_2)\), \(v_2 = w_1 \wedge w_2\) and \(H = G[Q]\), we have
  \(Q \cap \{s_{\ell(w_1 \wedge w_2)}, t_{\ell(w_1 \wedge w_2)}\} \neq \emptyset\).
  Hence by Claim~\ref{clm:tuniq}, we have \(s_{w_1 \wedge w_2} = s_{\ell(w_1 \wedge w_2)} \in Q\).
  By Lemma~\ref{lem:decompostion-st-subset} with \(u_1 = w_1 \wedge w_2\), \(u_2 = w_2\) and \(H = G[Q]\), there exists a node \(v\) such that \(w_1 \le v \le w_2\) and \(\{s_v, t_v\} \subseteq Q\).
  Since \(s_{w_2} \not \in Q\), we have \(v < w_2\) in \(T\) and thus \(v \le w(x_2, y_2)\) in \(T\).
  We have \(t_v \in Q\), so by Claim~\ref{clm:tuniq}, \(v \not \le w_1\) in \(T\), so \(v \not \le w(x_1, y_1)\) in \(T\).
  Hence the node \(v\) satisfies the claim.
\end{proof}

\begin{claim}\label{clm:trap}
  For every \(i \in \{1, \dots, n\}\), we have \(v \le w(x_i, y_i)\) in \(T\).
\end{claim}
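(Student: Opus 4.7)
My plan is to prove the claim by contradiction: supposing $v \not\le w(x_k, y_k)$ for some $k$, I would derive a contradiction from the strict alternating cycle via the separator property of $\{s_v, t_v\}$ in $G$. Once $v \le w(x_i, y_i)$ is obtained for every $i$, the case $i=1$ clashes with the earlier fact $v \not\le w(x_1, y_1)$, completing the whole argument that $I$ is reversible.

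First I would record the separator consequences of $\{s_v, t_v\}$: by the structure of the s-t tree-decomposition, this pair separates $G$ into the subtree rooted at $v$ and its complement. By Lemma~\ref{lem:wx}, every element $z$ of $P$ satisfies $z = m_{w(z)}$, so $z$ appears only in bags in the subtree rooted at $w(z)$; hence $z$ lies on the subtree side iff $w(z) \ge v$ in $T$. Combining $\{s_v, t_v\} \subseteq Q \subseteq U(x_1) \cap D(y_2)$ with the strict alternating cycle property $x_i \le y_j$ iff $j = i+1$ yields: for $j \ne 1$, $x_j \not\le s_v, t_v$; for $j \ne 2$, $s_v, t_v \not\le y_j$.

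Next, I would propagate inside-ness around the cycle. For $i \ne 1$, $U(x_i)$ is connected in the cover graph, contains $x_i$ and $y_{i+1}$, and is disjoint from $\{s_v, t_v\}$ by the above, so $x_i$ and $y_{i+1}$ must lie on the same side of the separator; symmetrically for $j \ne 2$, $D(y_j)$ puts $y_j$ and $x_{j-1}$ on the same side. This gives the equivalences ``$x_i$ inside iff $y_{i+1}$ inside'' for $i \ne 1$, and ``$y_j$ inside iff $x_{j-1}$ inside'' for $j \ne 2$. Starting from $v \le w(x_2, y_2)$ and iterating these around the cycle, one aims to conclude that every $x_i, y_i$ is inside.

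The main obstacle is bridging from $y_{i+1}$ to $x_{i+1}$: the equivalences above chain only through adjacent pairs $(x_i, y_{i+1})$, not across a single pair $(x_{i+1}, y_{i+1})$. To close this gap I would exploit the signature constraints: $\alpha(x_{i+1}, y_{i+1}) = 1$ places $w(x_{i+1}) \ltin w(y_{i+1})$ in the in-order of $T$, while $\gamma(x_{i+1}, y_{i+1}) = 1$ and $\delta(x_{i+1}, y_{i+1}) = 1$ give fine control of how $\{s_u, t_u\} \subseteq U(x_{i+1})$ can fail at ancestors $u \le w(x_{i+1}, y_{i+1})$. Once $w(y_{i+1}) \ge v$ is known, a renewed application of Claim~\ref{clm:existsw} to the chain $Q_{i+1}$ from $x_{i+1}$ to $y_{i+2}$ (or its dual via $P^d$ with the reversed decomposition) should produce a witness node whose position in $T$, by the constraints on $v$, forces $w(x_{i+1}) \ge v$ as well. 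Iterating this around the cycle yields $v \le w(x_i, y_i)$ for every $i$.
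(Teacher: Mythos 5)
Your setup is sound and close to the paper's in spirit: $\{s_v,t_v\} = Y_v \cap Y_{v'}$ acts as a separator, and strictness of the cycle (via $\{s_v,t_v\} \subseteq U(x_1) \cap D(y_2)$) shows that $U(x_i)$ for $i \neq 1$ and $D(y_j)$ for $j \neq 2$ avoid it. But the gap you flag yourself --- bridging from $y_{i+1}$ to $x_{i+1}$ --- is exactly the heart of the claim, and your proposed fix does not close it. Note first that your two families of links are in fact the same links: $U(x_i)$ connects $x_i$ to $y_{i+1}$, and $D(y_j)$ connects $y_j$ to $x_{j-1}$, which with $j = i+1$ is again the pair $(x_i, y_{i+1})$. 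So after propagation you have no information relating $y_{i+1}$ to $x_{i+1}$. The suggested remedy --- exploit $\alpha,\gamma,\delta$ and reapply Claim~\ref{clm:existsw} to $Q_{i+1}$ --- cannot be executed as stated: Claim~\ref{clm:existsw} requires the hypothesis $w(x,y) \ltin w(x',y')$ for the two pairs involved, and no such in-order relation is available between arbitrary consecutive pairs of the cycle (only between the first two, by the normalization made after Claim~\ref{clm:w-nae}).

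The ingredient you are missing is the type-$2$ condition itself, not the signature: $D(y_{i+1}) \cap Y_{w(x_{i+1},y_{i+1})} \neq \emptyset$. With it the induction step is immediate. The connected subgraph $G[D(y_{i+1})]$ contains $x_i \in Y_{w(x_i)}$ with $v \le w(x_i,y_i) \le w(x_i)$ in $T$, meets the bag $Y_{w(x_{i+1},y_{i+1})}$, and avoids $Y_v \cap Y_{v'} = \{s_v,t_v\}$ because $x_1 \not\le y_{i+1}$ for $i \neq 1$ by strictness. Observation~\ref{obs:tree-decomposition-separations} then says the edge $vv'$ does not lie on the path in $T$ between $w(x_i)$ and $w(x_{i+1},y_{i+1})$, which forces $v \le w(x_{i+1},y_{i+1})$ in $T$. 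Iterating from $i=2$ around the cycle reaches $v \le w(x_1,y_1)$, the desired contradiction. Your vertex-by-vertex ``same side'' bookkeeping becomes unnecessary once you aim the separator argument at the bag $Y_{w(x_{i+1},y_{i+1})}$ rather than at the element $y_{i+1}$.
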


\begin{proof}
  To prove the claim it is enough to show that for every \(i \in \{2, \dots, n\}\), if \(v \le w(x_i, y_i)\) in \(T\), then \(v \le w(x_{i+1}, y_{i+1})\) in \(T\).
  Let \(i \in \{2, \dots, n\}\) and suppose that \(v \le w(x_i, y_i)\) in \(T\).
  Since \(v \not \le w(x_1, y_1)\) in \(T\), \(v\) is not the root of \(T\).
  Let \(v'\) denote the parent of \(v\).
  We have \(x_i \in D(y_{i+1}) \cap Y_{w(x_i)}\) and since \((x_{i+1}, y_{i+1})\) is of type \(2\), \(D(y_{i+1}) \cap Y_{w(x_{i+1}, y_{i+1})} \neq \emptyset\).
  Moreover, by strictness of the alternating cycle we have \(D(y_{i+1}) \cap U(x_1) = \emptyset\), so \(D(y_{i+1}) \cap \{s_v, t_v\} = \emptyset\).
  Hence, by Observation~\ref{obs:tree-decomposition-separations} with \(u_1 = w(x_i)\), \(u_2 = w(x_{i+1}, y_{i+1})\), \(v_1 = v\), \(v_2 = v'\) and \(H=G[D(y_{i+1})]\), the edge \(v v'\) does not lie on the path between \(w(x_i)\) and \(w(x_{i+1}, y_{i+1})\) in \(T\).
  Since \(v \le w(x_{i}, y_{i}) \le w(x_i)\) in \(T\), this implies \(v \le w(x_{i+1}, y_{i+1})\) in \(T\), as required.
  The claim follows.
\end{proof}

By Claim~\ref{clm:trap}, we have \(v \le w(x_1, y_1)\) in \(T\).
The obtained contradiction completes the proof that the set \(I\) is reversible.
Hence the proof of Theorem~\ref{thm:main} is complete. 

\section*{Acknowledgments}

I thank Piotr Micek for many helpful discussions.

\bibliographystyle{plain}
\bibliography{new-treewidth-2-dimension}
\end{document}